\documentclass[10pt,a4paper]{article}

\usepackage[latin1]{inputenc}
\usepackage[left=2.00cm, right=2.00cm, top=2.00cm]{geometry}
\usepackage{amsmath}
\usepackage{amsthm}
\usepackage{amsfonts}
\usepackage{amssymb}
\usepackage{graphicx}
\usepackage{tikz-cd}
\usepackage{subcaption}

\theoremstyle{plain}
\newtheorem{thm}{Theorem}[section]

\newtheorem{propn}[thm]{Proposition}
\newtheorem{lem}[thm]{Lemma}
\theoremstyle{definition}

\numberwithin{equation}{section}


\newcommand{\C}{\mathbb{C}}
\newcommand{\R}{\mathbb{R}}
\newcommand{\Q}{\mathbb{H}}
\newcommand{\Span}{\text{Span}}
\DeclareMathOperator{\Imag}{\text{Im}}
\DeclareMathOperator{\ad}{ad}
\newcommand{\orb}{\mathcal{O}}


\title{Singular Reduction of the 2-Body Problem on the 3-Sphere and the 4-Dimensional Spinning Top}
\author{Philip Arathoon}
\date{March 2019}
\begin{document}
	\maketitle
	\begin{abstract}
		We consider the dynamics and symplectic reduction of the 2-body problem on a sphere of arbitrary dimension. It suffices to consider the case for when the sphere is 3-dimensional and where we take the group of symmetries to be \(SO(4)\). As the 3-sphere is a group, both left and right multiplication on itself are commuting symmetries which together generate the full symmetry group. This gives rise to a notion of left and right momenta for the problem, and allows for a reduction in stages, first by the left and then the right, or vice versa. The intermediate reduced spaces obtained by left or right reduction are shown to be coadjoint orbits of the special Euclidean group \(SE(4)\). The full reduced spaces are generically 4-dimensional and we describe these spaces and their singular strata.
		
		The dynamics of the 2-body problem descend through a double cover to give a dynamical system on \(SO(4)\), which after reduction is the same as that of a 4-dimensional spinning top with symmetry. This connection allows us to `hit two birds with one stone' and derive results about both the spinning top and the 2-body problem simultaneously. We provide the equations of motion on the reduced spaces and fully classify the relative equilibria and discuss their stability.
		
	\end{abstract}

\section*{Background and outline of the paper}
The 2-body problem in ordinary flat Euclidean space enjoys not only the symmetries of rotation and translation, but also the larger group of Galilean transformations. It is through such a transformation into a centre of mass frame that the problem reduces to the ordinary Kepler problem. For quite some time now people have been interested in the generalisation of the 2-body problem to spaces of constant non-zero curvature, where it is no longer the case that it reduces to the problem of one body. Principal contributions in this area include, but are not limited to, the numerous works of Diacu, see in particular the book \cite{diacu}, the papers of Borisov, Mamaev and others in \cite{borisov2004,borisovrestricted,borisov2016}, and the work of Shchepetilov in \cite{badshchepetilov}. Recently the case for the 2-dimensional surfaces of constant curvature, the sphere and the hyperbolic/Lobachevsky plane, has been comprehensively treated in \cite{james}. Therein they perform symplectic reduction on the subset where the action is free, and fully classify the relative equilibria. This paper is written in response to open problems presented at the end of that work, in particular we aim to address the generalisation of their results to the 3-sphere.

Typically, when discussing the 2-body problem in spaces with non-zero curvature one begins by highlighting that, unlike in the Euclidean case, the symmetry of the problem no longer includes translations. This is indeed true for all but the two cases for when the sphere is itself a group; that is, for when it is the circle or the 3-sphere. Consequently, for this very special case translations do exist, and in a sense there are more than in the flat case. As the group is non-abelian, there is a difference between translations given by group multiplication on the left and right. The entire group \(SO(4)\) of symmetries is generated in this way. This establishes the well-known double cover of \(SO(4)\) and allows us to identify both the configuration space for the 2-body problem on the 3-sphere with its group of symmetries. It is this curious aspect of the problem which underlies the work contained in this paper, an outline of which we now provide.

We begin by casting the problem entirely in terms of quaternions. This is a natural setting for the 3-sphere which may be taken to be the set of unit-length quaternions, and for where group multiplication is simply given by multiplication of quaternions. As left and right multiplication commute, we set out a plan to reduce the problem in stages: first reducing by either the left or right translations to obtain an intermediate reduced space, and then by the other to obtain the full reduced space. We conclude the introduction by demonstrating how the dynamics project under the \(SO(4)\) double cover to give the symmetric heavy top in 4-dimensional space.

By drawing an analogy with the reduction of the Lagrange top, where reduction is also done in stages by first reducing in the body frame and then in the space frame about the axes of symmetry, we invoke the Semidirect Product Reduction by Stages Theorem to express the left and right reduced spaces as coadjoint orbits of \(SE(4)\). This is entirely analogous to the situation for the Lagrange top, whose intermediate reduced spaces are coadjoint orbits of \(SE(3)\). As the actions of left and right translation are free, these reduced spaces are well-behaved smooth manifolds. However, to complete the full reduction by the residual left or right action we necessarily have to handle non-free and singular points of the momentum. We employ the methods of singular and universal reduction through the use of some invariant theory to describe these reduced spaces, which are generically 4-dimensional. We give the corresponding equations of motion on the full reduced space for both the 2-body problem and the spinning top, and explicitly exhibit an additional integral for the symmetric spinning top demonstrating complete integrability.

We then turn our attention to the relative equilibria. Instead of classifying these by finding fixed points in the reduced space directly, we instead find solutions in the intermediate left and right reduced spaces which are the orbits of one-parameter subgroups. This pleasantly turns out to be comparatively easy, amounting to an entirely linear problem in Euclidean geometry. Having classified the solutions in the left reduced space, it is then only a matter of reconstruction to obtain the full classification of relative equilibria on the original space. We then explore the stability of the corresponding fixed points in the full reduced space by linearising the flow at these points. In this way, for the hamiltonians corresponding to the 2-body problem and the Lagrange top, we derive the linear stability results for the relative equilibria. We also provide the images of the energy-momentum map, and in doing so, obtain a picture for the bifurcations of the relative equilibria. Finally, in an effort to strengthen the stability results, we give the signature of the Hessian at the fixed points for the relative equilibria for the 2-body problem, and obtain the strongest possible stability result, that of Lyapunov stability, for linearly stable points of the Lagrange top.
\section{Introduction}
\subsection{The problem setting}
Consider the motion of two interacting particles of mass \(m_1\) and \(m_2\) constrained to move on the unit sphere \(S^n\subset\R^{n+1}\). The interaction is governed by a potential \(V\) which is a function of the distance between the two particle positions, where \(\R^{n+1}\) is equipped with the standard Euclidean metric. 

The initial position and velocity vectors of the two particles span at most a 4-dimensional linear subspace (for \(n>2\)). The intersection of this with the sphere is an equatorial 3-sphere. Reflection in this subspace is a symmetry of the dynamics and therefore, the motion must be forever contained to this 3-sphere. Consequently, it suffices to consider the case \(n=3\). This case also encompasses those for \(n=1\) and \(n=2\).

The space \(\R^4\) may be identified with the algebra \(\Q=\Span\{1,i,j,k\}\) of real quaternions. The standard inner product is written in terms of quaternionic multiplication by
\begin{equation}\label{innerproduct}
\langle p,q\rangle=\frac{1}{2}\left(pq^\dagger+qp^\dagger\right)
\end{equation}
where \(q^\dagger\) denotes the complex conjugate of the quaternion \(q\). We will denote the unit sphere by the letter \(G\) to highlight that it forms a group with respect to quaternionic multiplication. The hamiltonian formulation of the problem has phase space \(T^*(G_1\times G_2)\). Strictly speaking one should subtract the collision set from this space, but we will not concern ourselves with this for now. By identifying tangent spaces with their duals using the inner product in \(\Q\), the phase space may be identified with the set
\begin{equation}
M=\left\{
(g_1,p_1,g_2,p_2)\in\Q^4~|~g_1,g_2\in G,~\langle p_1,g_1\rangle=\langle p_2,g_2\rangle=0
\right\}.
\end{equation}
The position vectors for both particles are \(g_1\) and \(g_2\), and the linear momenta \(p_1\) and \(p_2\) dynamically given by \(m_1\dot{g}_1\) and \(m_2\dot{g}_2\) respectively. The dynamics are determined by the hamiltonian
\begin{equation}\label{ham}
H(g_1,p_1,g_2,p_2)=\frac{|p_1|^2}{2m_1}+\frac{|p_2|^2}{2m_2}+V(g_1,g_2).
\end{equation}
Here \(|p|^2=\langle p,p\rangle\), and \(V(g_1,g_2)\) is a function of the distance \(|g_2-g_1|\).
\subsection{Symmetries and one-parameter subgroups}
Owing to the reformulation of the problem in terms of quaternions, the \(SO(4)\)-symmetry on the configuration space may be realised by the action of its well-known double cover, \(G\times G\). Explicitly this is given by \(\Phi\colon G\times G\rightarrow SO(4)\), where
\begin{equation}
\label{so4action}
\Phi(l,r)\cdot q=lqr^{-1}
\end{equation}
for \(q\in\Q\cong\R^4\). The cotangent lift of this action to \(M\) acts diagonally on each component, and a quick check confirms that it indeed preserves the hamiltonian. We will from now on write the symmetry group as \(G_L\times G_R\) to distinguish it from the configuration space \(G_1\times G_2\). 

The space \(\Imag\Q\) of purely imaginary quaternions is equal to the Lie algebra \(\mathfrak{g}\) of \(G\), and the adjoint action given by \(\Phi(g,g)\). The infinitesimal adjoint action is obtained by differentiating \(g\) to give
\begin{equation}
\label{crossproduct}
\ad_\omega q=\omega q-q\omega=[\omega,q]
\end{equation}
for \(q,\omega\in\Imag\Q\). By identifying \(\Imag\Q\) with \(\R^3\) in the obvious sense, the adjoint action is related to the cross-product by \([\omega,q]=2(\omega\times q)\).

The adjoint action of \(G\) acts transitively on each sphere of imaginary quaternions of a given length. It follows that every one-parameter subgroup of \(G_L\times G_R\) is conjugate to one of the form \(\{(e^{it\eta},e^{it\xi}),t\in\R\}\) for some \(\eta,\xi\ge 0\). With the aid of \eqref{so4action} one sees that the action of this subgroup on \(\Q\) preserves the mutually orthogonal, oriented planes \(\C=\Span\{1,i\}\) and \(\C j=\Span\{j,k\}\). This action rotates \(\C\) and \(\C j\) through an angle of \(\xi-\eta\) and \(\xi+\eta\) respectively with each unit of time.

Given any one-parameter subgroup conjugate to that given above, we categorise it into one of the following four types: \emph{trivial} for when \(\xi=\eta=0\); a \emph{simple rotation} for \(\xi=\eta\ne 0\); an \emph{isoclinic rotation} for when precisely one of either \(\xi\) or \(\eta\) is equal to zero; and finally, the generic subgroup is called a \emph{double rotation} for when \(\xi,\eta\ne 0\) and \(\xi\ne\eta\). 

\subsection{Reduction and relative equilibria}
It is curious that the symmetry group and configuration space are both the same. This is identical to the familiar situation of cotangent bundle reduction of a group under left/right multiplication. However, for our example the essential difference is that the group action is given by \emph{simultaneous} left and right diagonal multiplication. This complicates the picture somewhat; in particular, this group action is not free. The points at which the action is not free may be characterised with the following argument: \(g_1\) and \(g_2\) belong to some plane and thus, the isotropy group fixing these two points includes rotations in the orthogonal plane. The action therefore fails to be free if and only if the momenta \(p_1\) and \(p_2\) have no component in this orthogonal plane, and thus, all vectors are coplanar. We will call such points \emph{cocircular} as the resulting motion remains inside a great circle on the sphere.

Nonetheless, the actions of left and right multiplication, given by restriction to one of the \(G_L\) or \(G_R\) subgroups, is free. For a group acting on its cotangent bundle by left/right cotangent lift, the momentum map is given by right/left translation back to the origin \cite{arnold}. As the \(G_L\)- and \(G_R\)-actions are the product of two copies of left and right multiplication respectively, the left momentum map is given by
\begin{equation}
\label{left_mom}
J_L(g_1,p_1,g_2,p_2)= \underbrace{p_1g_1^{-1}}_{L_1}+\underbrace{p_2g_2^{-1}}_{L_2}=\lambda\in\mathfrak{g}_L^*,
\end{equation}
and the right momentum map by
\begin{equation}
\label{right_mom}
J_R(g_1,p_1,g_2,p_2)= \underbrace{g_1^{-1}p_1}_{R_1}+\underbrace{g_2^{-1}p_2}_{R_2}=\rho\in\mathfrak{g}_R^*.
\end{equation}
We write \(L_i=p_ig_i^{-1}\) and \(R_i=g_i^{-1}p_i\) to denote the \emph{left} and \emph{right momentum} of the \(i\)\textsuperscript{th}-particle respectively. The \emph{total left} and \emph{total right momenta} \(\lambda\) and \(\rho\) are both first integrals. As the actions of left and right multiplication are each free and proper, we may safely define the \emph{left} and \emph{right reduced spaces} \(M_\lambda\) and \(M_\rho\) respectively. 

The left and right reduced spaces both inherit a group action from the residual right and left symmetry. These reduced spaces can therefore be reduced again in stages. From the Commuting Reduction Theorem \cite{bigstages}, the momentum map for the full symmetry group is \(J_{L,R}=J_L\times J_R\), and for when the action is free and proper, the staged reduced spaces \((M_\lambda)_\rho\) and \((M_\rho)_\lambda\) are both symplectomorphic to the `one-shot' \emph{full reduced space} \(M_{\lambda,\rho}\). We would therefore like to understand the set of critical values and points of \(J_{L,R}\).
\begin{propn}\label{crit}
	The set of critical values of \(J_{L,R}\) are those \((\lambda,\rho)\) with \(|\lambda|=|\rho|\). The pre-image of this set consists of all \((g_1,p_1,g_2,p_2)\) belonging to a common 3-dimensional subspace, and thus correspond to solutions contained within an equatorial 2-sphere. For this reason we will refer to such critical points as being \emph{cospherical}.
\end{propn}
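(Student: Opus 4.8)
The plan is to locate the critical set of $J_{L,R}$ directly and to establish an algebraic identity expressing $|\lambda|^2-|\rho|^2$ through a determinant in the four vectors, and then combine these. Since $J_{L,R}$ is the momentum map of the cotangent-lifted $G_L\times G_R$-action on $M$, a point is critical exactly when its isotropy algebra is nonzero (the image of the derivative of a momentum map is the annihilator of the isotropy algebra). Now $(\eta,\xi)\in\mathfrak{g}_L\oplus\mathfrak{g}_R$ generates on $\Q\cong\R^4$ the skew-symmetric linear map $q\mapsto\eta q-q\xi$, and as $(\eta,\xi)$ runs through $\mathfrak{g}_L\oplus\mathfrak{g}_R$ this runs through all of $\mathfrak{so}(4)$ because $\Phi$ is a double cover; moreover $(\eta,\xi)$ lies in the isotropy algebra at $(g_1,p_1,g_2,p_2)$ precisely when this map annihilates each of $g_1,p_1,g_2,p_2$. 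A nonzero skew-symmetric $4\times4$ matrix has rank $2$ or $4$, and rank $4$ is impossible since $g_1\neq0$; hence the isotropy algebra is nonzero if and only if $g_1,p_1,g_2,p_2$ lie in a common $2$-plane, that is, exactly at the cocircular points. So the critical set of $J_{L,R}$ equals the cocircular set, which sits inside the cospherical set.

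The heart of the matter is the identity $|\lambda|^2-|\rho|^2=4\det(g_1\,|\,p_1\,|\,g_2\,|\,p_2)$ on $M$, the right-hand side being the determinant of the $4\times4$ matrix with the listed columns. Because conjugation is an isometry, $|L_i|=|R_i|=|p_i|$, so the square terms cancel and $|\lambda|^2-|\rho|^2=2\bigl(\langle L_1,L_2\rangle-\langle R_1,R_2\rangle\bigr)$; transporting the two inner products to the identity (by multiplication by $g_2$ on the right and on the left respectively) rewrites this as $2\langle w,p_2\rangle$ with $w:=p_1g_1^\dagger g_2-g_2g_1^\dagger p_1$. Using only cyclicity of the quaternionic real part one checks that $w$ is orthogonal to $g_1$ and to $p_1$ for free, and to $g_2$ once the constraint $\langle p_1,g_1\rangle=0$ is invoked; since $w$ is trilinear in $(g_1,p_1,g_2)$ and always orthogonal to all three, it is a fixed scalar multiple of the generalised cross product $g_1\times p_1\times g_2$ (the vector $v$ with $\langle v,\cdot\rangle=\det(g_1\,|\,p_1\,|\,g_2\,|\,\cdot)$, the unique such trilinear map up to scale), so $|\lambda|^2-|\rho|^2=2\langle w,p_2\rangle$ equals that scalar times $2\det(g_1\,|\,p_1\,|\,g_2\,|\,p_2)$, and the scalar is $2$ as one evaluation confirms; the degenerate case where $g_1,p_1,g_2$ are dependent follows by continuity, both sides vanishing. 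A $4\times4$ determinant vanishes exactly when its columns are linearly dependent, so $|\lambda|=|\rho|$ if and only if $g_1,p_1,g_2,p_2$ span a subspace of dimension at most three; this is the stated description of $J_{L,R}^{-1}\{|\lambda|=|\rho|\}$, and exactly as in the reduction from general $n$ to $n=3$, reflection in that subspace fixes the initial data and preserves both the kinetic and the potential parts of $H$, so the solution is confined forever to the great $2$-sphere it cuts out.

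It remains to see that every $(\lambda,\rho)$ with $|\lambda|=|\rho|$ is attained as a critical value. For $|\lambda|=s$, the cocircular point $g_1=1$, $g_2=-1$, $p_1=s\,i$, $p_2=0$ lies in the plane $\C$ and has $J_L=J_R=s\,i$; applying the residual $G_L\times G_R$-symmetry, which by transitivity of $\Ad$ on the origin-centred spheres of $\Imag\Q$ rotates $J_L$ and $J_R$ independently within their respective spheres, carries this to a cocircular (hence critical) point with left momentum $\lambda$ and right momentum $\rho$. Together with the previous paragraph this shows the critical values of $J_{L,R}$ are exactly $\{(\lambda,\rho):|\lambda|=|\rho|\}$, with the asserted preimage. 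The step I expect to be the real work is the determinant identity: conceptually it is just the fact that $w$ is, up to a constant, the generalised cross product of $g_1,p_1,g_2$, but carrying it out cleanly calls for some care with non-commutativity and with where precisely the constraints $\langle p_i,g_i\rangle=0$ enter.
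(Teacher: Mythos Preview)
Your argument is correct and lands on essentially the same two ingredients as the paper---identifying the critical set with the cocircular points via the isotropy, and computing \(|\lambda|^2-|\rho|^2\) in terms of the four vectors---but you package the second step differently. The paper normalises \(g_1=1\), \(p_1\in\Imag\Q\) and rewrites \(|\lambda|^2-|\rho|^2\) as the scalar triple product \(4\langle p_1,\overline g_2\times p_2\rangle\) in \(\Imag\Q\cong\R^3\); the cospherical condition then reads ``\(p_1,\overline g_2,p_2\) coplanar in \(\Imag\Q\)'', and equivariance transports this to the full phase space. You instead prove the invariant identity \(|\lambda|^2-|\rho|^2=4\det(g_1\,|\,p_1\,|\,g_2\,|\,p_2)\) directly on \(M\), which avoids any choice of gauge and makes the preimage statement immediate. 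Your route also makes explicit the step (only implicit in the paper) that every \((\lambda,\rho)\) with \(|\lambda|=|\rho|\) is actually hit by a cocircular point.

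Two small remarks. First, the orthogonality \(\langle w,g_2\rangle=0\) does not in fact need the constraint \(\langle p_1,g_1\rangle=0\): by cyclicity of the real part both terms equal \(|g_2|^2\,\mathrm{Re}(p_1g_1^\dagger)\) and cancel unconditionally, so your quadrilinear form \((a,b,c,d)\mapsto\langle w(a,b,c),d\rangle\) is genuinely alternating on all of \(\Q^4\). Second, the cleanest way to pin down the scalar is exactly that observation: an alternating \(4\)-form on \(\R^4\) is a multiple of \(\det\), and one evaluation fixes the constant. Phrasing it this way sidesteps the slightly delicate ``unique trilinear map orthogonal to its inputs'' claim.
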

\begin{proof}
	The momentum map has critical values on points at which the action is not locally free \cite{critreg}. As we have seen, these are the cocircular points. One may suppose the momenta and positions of such a point are contained to the complex plane in \(\Q\), from which it follows from the definitions that \(\lambda=\rho\). As the momentum map is equivariant, taking the orbit through these values gives us the desired set of critical values.
	
	From the definition of \(\lambda\) and \(\rho\), we may write
	\begin{equation}
	|\lambda|^2-|\rho|^2=2\langle L_1,L_2\rangle-2\langle R_1,R_2\rangle.
	\end{equation}
	We may suppose \(g_1=1\) and \(p_1\) is purely imaginary. By writing the imaginary part of \(g_2\) as \(\overline{g}_2\) the expression above may be rewritten as
	\[
	4\langle p_1,\overline{g}_2\times p_2\rangle.
	\]
	This is equal to zero if and only if \(p_1\), \(p_2\) and \(\overline{g}_2\) span a common plane in \(\Imag\Q\). However, this is equivalent to \(g_1\), \(p_1\), \(g_2\) and \(p_2\) belonging to a common 3-dimensional subspace given by the span of this plane together with the real line. Hence, by equivariance, the set of cospherical points is exactly the set of critical points of the momentum map.
\end{proof}
A relative equilibria (RE) in a symplectic manifold with a hamiltonian group action is a solution which is also the orbit under the action of a one-parameter subgroup of the group of symmetries \cite{marsden_1992}. Equivalently, it is a solution which projects to a point in the reduced space. For our problem, the right and left multiplication naturally descend to give well-defined actions on the left and right reduced spaces respectively. It follows that RE in \(M\) with respect to the \(G_L\times G_R\)-action project into RE in both the left and right reduced spaces. In fact, the converse is also true. 
\begin{propn}\label{REpropn}
	Any RE in \(M\) projects to RE in both the left and right reduced spaces. Conversely, any RE in any of the left or right reduced spaces is the projection of a RE in \(M\).
\end{propn}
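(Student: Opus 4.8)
The plan is to handle the two implications separately, leaning on the standard characterisation of a relative equilibrium as a critical point of an augmented Hamiltonian (valid at free and non-free points alike). Write the generator of a one-parameter subgroup of \(G_L\times G_R\) as a pair \((\eta,\xi)\in\mathfrak{g}_L\oplus\mathfrak{g}_R\), so that \(\exp(t(\eta,\xi))\) acts on \(M\) as \(\exp(t\eta)_L\circ\exp(t\xi)_R\), and recall that the reduced spaces \(M_\lambda\) and \(M_\rho\) carry residual actions of \(G_R\) and \(G_L\) respectively, whose momentum maps \(\bar J_R\), \(\bar J_L\) are the descents of \(J_R\), \(J_L\) — these descend because the two factors commute, so that \(J_R\) is \(G_L\)-invariant and \(J_L\) is \(G_R\)-invariant.

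For the forward direction, let \(\gamma(t)=\exp(t(\eta,\xi))\cdot m\) be a relative equilibrium in \(M\) with (conserved) momentum \((\lambda,\rho)=J_{L,R}(m)\). Equivariance of \(J_L\) together with its conservation along \(\gamma\) forces \(\Ad^*_{\exp(t\eta)}\lambda=\lambda\), i.e.\ \(\exp(t\eta)_L\in(G_L)_\lambda\), and likewise \(\exp(t\xi)_R\in(G_R)_\rho\). Since \((G_L)_\lambda\) acts trivially on \(M_\lambda\), projecting by \(\pi_L\colon J_L^{-1}(\lambda)\to M_\lambda\) gives \(\pi_L(\gamma(t))=\exp(t\xi)_R\cdot\pi_L(m)\), the orbit of a one-parameter subgroup of the residual \(G_R\)-action; as projections of solutions are solutions of the reduced dynamics, this is a relative equilibrium in \(M_\lambda\). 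The identical argument with \(\pi_R\) produces one in \(M_\rho\).

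For the converse, recall that \(m\) is a relative equilibrium with velocity \(\zeta\) exactly when the fundamental vector field \(\zeta_M\) agrees with \(X_H\) at \(m\), equivalently when \(d\!\left(H-\langle J_{L,R},\zeta\rangle\right)|_m=0\). Given a relative equilibrium \(\bar m\) in \(M_\lambda\) with velocity \(\xi\in\mathfrak{g}_R\), it is a critical point of \(\bar H-\langle\bar J_R,\xi\rangle\) on \(M_\lambda\); pulling back along the surjective submersion \(\pi_L\), any lift \(m\in\pi_L^{-1}(\bar m)\subset J_L^{-1}(\lambda)\) is a critical point of the restriction of \(H-\langle J_R,\xi\rangle\) to \(J_L^{-1}(\lambda)\). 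Because the \(G_L\)-action is free, \(dJ_L|_m\) is surjective onto \(\mathfrak{g}_L^*\), so Lagrange multipliers yield \(\eta\in\mathfrak{g}_L\) with \(d\!\left(H-\langle J_R,\xi\rangle-\langle J_L,\eta\rangle\right)|_m=0\), that is \(d\!\left(H-\langle J_{L,R},(\eta,\xi)\rangle\right)|_m=0\); hence \(m\) is a relative equilibrium in \(M\) with velocity \((\eta,\xi)\), and it projects to \(\bar m\). The case of \(M_\rho\) is symmetric. (Alternatively one can invoke the Commuting Reduction Theorem: since \((M_\lambda)_\rho\cong M_{\lambda,\rho}\) compatibly with projections and reduced flows, a relative equilibrium in \(M_\lambda\) is exactly a solution descending to a point of \(M_{\lambda,\rho}\), hence the restriction to \(J_{L,R}^{-1}(\lambda,\rho)\) of a relative equilibrium in \(M\).)

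The step needing the most care — more a matter of bookkeeping than a genuine obstacle — is the Lagrange-multiplier passage between critical points upstairs and downstairs: one must confirm that \(\bar H\), \(\bar J_R\) are the descents of \(H\), \(J_R\), that \(\pi_L\) being a submersion makes the two critical-point conditions equivalent, and that the regularity of \(J_L\) needed for the multiplier is supplied by freeness of the \(G_L\)-action (which holds at every point of \(M\), cospherical ones included, so that the singular points of \(J_{L,R}\) cause no trouble here). One should also check that the momenta are consistent, namely that the value \(\rho\) at which the relative equilibrium in \(M_\lambda\) sits equals \(\bar J_R(\bar m)=J_R(m)\), so that ``projection'' makes sense in both directions.
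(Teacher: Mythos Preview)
Your argument is correct. The paper, however, dispatches the proposition in one sentence by a different and more categorical route: it observes that a RE is by definition a point projecting to a fixed point of the induced flow on the orbit quotient, and then invokes the commutativity of the square of projections \(M\to M/G_L\to M/(G_L\times G_R)\) and \(M\to M/G_R\to M/(G_L\times G_R)\). Since both composites equal \(\pi_{G_L\times G_R}\), a point \(m\in M\) descends to a fixed point in \(M/(G_L\times G_R)\) if and only if \(\pi_{G_L}(m)\) does (via the residual \(G_R\)-quotient), which is exactly the statement in both directions at once.

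Your approach is more hands-on: you track the one-parameter subgroup explicitly in the forward direction and use the augmented-Hamiltonian/Lagrange-multiplier criterion for the converse. This buys you something the paper's argument does not make visible, namely the actual velocity \((\eta,\xi)\) of the lifted RE, with \(\eta\) appearing as the multiplier attached to the constraint \(J_L=\lambda\). The cost is the bookkeeping you flag at the end (regularity of \(J_L\), compatibility of the descended maps), all of which is indeed secured by the freeness of the \(G_L\)-action. Your parenthetical alternative via the Commuting Reduction Theorem is essentially the paper's argument in disguise.
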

\begin{proof}
	The proof follows from commutativity of the diagram in Figure~\ref{commute} which consists of canonical projection maps onto orbit quotients, and the definition of a RE as a fixed point in a reduced space.
\end{proof}
The task of classifying RE in \(M\) is therefore equivalent to that of finding all RE in any one of the left or right reduced spaces. This will turn out to be more tractable than trying to equivalently classify all of the fixed points on the full reduced space.
\begin{figure}
	\begin{center}
		\begin{tikzcd}
		& M \arrow[dd,"\pi_{G_L\times G_R}"]\arrow[rd,"\pi_{G_R}"]\arrow[ld,"\pi_{G_L}"']& \\ M/G_L\arrow[rd,"\pi_{G_R}"'] & & M/G_R \arrow[ld,"\pi_{G_L}"]\\ & M/(G_L\times G_R)&
		\end{tikzcd}
	\end{center}
\caption{\label{commute} }
\end{figure}
\subsection{The Lagrange top}
The double cover in \eqref{so4action} is a local diffeomorphism and so lifts to a double cover of cotangent bundles which is a local symplectomorphism. Furthermore, as the hamiltonian factors through this double cover, the dynamics factor through as well. One may also see from \eqref{so4action} that the left and right \(G\)-symmetry descends through the double cover to give the left and right multiplication of \(SO(3)\) on \(SO(4)\), where \(SO(3)\) is the subgroup fixing the real line in \(\Q\cong\R^4\). We thus have a dynamical system on \(SO(4)\) with a left and right \(SO(3)\)-symmetry. This situation should feel familiar: it is the exact same situation we have for a symmetric spinning top in 4 dimensions, as we now recall.

The configuration of a rigid body with a fixed point at the origin in \(\R^4\) may be determined by an element in \(SO(4)\), that element being the transformation which sends the body from a given initial state to its current one. For when the body is under the influence of a potential which is a function of `height' in \(\R^4\), for which the direction of increasing height we will call the \emph{vertical}, the dynamical system is that of the \emph{heavy top}. This system is invariant under left multiplication of the \(SO(3)\) subgroup which fixes the vertical. If the body is also invariant about rotations through a line within the body, which we shall call \emph{the body axis}, then we have the 4-dimensional generalisation of the \emph{Lagrange top}. We may suppose that the body axis and the vertical are aligned when the body is in its initial identity configuration. In this way, the full symmetry of the system is both left and right multiplication of the \(SO(3)\) subgroup fixing the vertical.

The study of the ordinary Lagrange top in 3 dimensions is old and well understood. We recommend the modern accounts of the problem given in \cite{cushbook,ratiu1982lagrange,heavytopgeomtreatment}. Recently there has been some attention given to the higher dimensional generalisations of the spinning top \cite{somerecent}. The higher dimensional version of the Lagrange top, as we have defined it, was studied by Beljaev in \cite{belyaev} and shown to be integrable. We note that an alternative generalisation is given in \cite{ratiuintegrable} which is also shown to be integrable. 

The aim now is to find the hamiltonian on \(M\) whose dynamics project through the double cover to give the Lagrange top dynamics on \(SO(4)\). To do this, we describe the hamiltonian on \(T^*SO(4)\) and pull it back under the double cover. When the potential is linear in height the hamiltonian is given by
\begin{equation}\label{lagrangian}
\frac{1}{2}\langle L,\mathbb{I}^{-1}(L)\rangle+\gamma\langle ac_0,v_0\rangle.
\end{equation}
Here \(a\in SO(4)\) is the configuration of the body, \(c_0\) and \(v_0\) are the initial centre of mass and vertical vectors, \(\gamma>0\) a constant, and \(\mathbb{I}\colon\mathfrak{so}(4)\rightarrow\mathfrak{so}(4)^*\) is the inertia tensor of the body, where \(L\) is the angular momentum in the body frame. Identifying \(\mathfrak{so}(4)\) with its dual using the standard trace form gives 
\begin{equation}
\label{inertia}
\mathbb{I}(\omega)=\frac{1}{2}(A\omega+\omega A)
\end{equation}
for \(A=\text{diag}(1,1,1,I_4)\). Here \(I_4\) is the moment of inertia of the body aligned along the vertical, and where we have chosen to work in units with the other three moments of inertia set to one. Differentiating \(\Phi\) in \eqref{so4action} at the identify gives the well-known isomorphism \(\mathfrak{g}_1\times\mathfrak{g}_2\rightarrow\mathfrak{so}(4)\), the pullback of which is given by
\begin{equation}
\label{so4isom}
\Phi^*;L=\begin{pmatrix}
\widehat{\Omega} & \eta \\ -\eta
^T& 0
\end{pmatrix}\longmapsto (\Omega+\eta,\Omega-\eta).
\end{equation}
Here we are identifying \(\mathfrak{g}=\Imag\Q\) with vectors in \(\R^3\), and where \(\widehat{\Omega}\) denotes the element of \(\mathfrak{so}(3)\) with \(\widehat{\Omega}v=\Omega\times v\) for all \(v\in\R^3\). In our identification of \(\R^4\) with \(\Q\), we are setting the vertical \(v_0=c_0\) to be the real unit \(1\).  It is a routine exercise to show that the right momenta \((R_1,R_2)\) and the body angular momentum \(L\), both obtained by left translation to the identity, are related through the double cover by \((R_1,R_2)=\Phi^*(L)\). Using this identity along with \(a=\Phi(g_1,g_2)\) allows us to pull back the hamiltonian in \eqref{lagrangian} to obtain
\begin{equation}
\label{lagrangeham}
\frac{1+\alpha}{4}(|p_1|^2+|p_2|^2)+\frac{1-\alpha}{2}\langle R_1,R_2\rangle+\gamma\langle g_1,g_2\rangle
\end{equation}
where \(\alpha=2(1+I_4)^{-1}\). Note that this is in the same form of the 2-body hamiltonian in \eqref{ham} except for the presence of the \(\langle R_1,R_2\rangle\) term. We will see later that, although these hamiltonians are different, on the full reduced space they differ by a Casimir, and thus, give the same flow.
\section{Reduction}
\subsection{The left and right reduced spaces}
Motivated by the connection between the symmetry of the problem with that of the Lagrange top, we will emulate a method used for reducing the Lagrange top by the left or right symmetry by using the Semidirect Product Reduction by Stages Theorem as demonstrated in \cite{SDbetter}. In standard treatments of the Lagrange top this theorem identifies the reduced spaces with coadjoint orbits of the special Euclidean group.

\begin{thm}[Semidirect Product Reduction by Stages, \cite{SDbetter}]
	Let \(V\) be a representation of \(H\) and consider the semidirect product \(S=H\ltimes V\). For a given \(p\in V^*\) let \(H_p\) denote the stabiliser of this element with respect to the contragredient representation, and consider the action of \(H_p\) on \(T^*H\) by cotangent lift on the left/right. There is a Poisson immersion of \(T^*H/H_p\) into \(\mathfrak{s}^*_\pm=\mathfrak{h}^*\times V^*\) given by sending the orbit through \(\eta\in T^*_aH\) to 
	\begin{equation}\label{slide}
	\left(\mathcal{L}_{a^{-1}}^*\eta,a^{-1}p\right)\in\mathfrak{s}^*_+\quad\text{or}\quad \left(\mathcal{R}_{a^{-1}}^*\eta,ap\right)\in\mathfrak{s}^*_-
	\end{equation}
	for the left/right case respectively. Here the \(\pm\) sign indicates that the Poisson structure differs by a sign between the spaces, and the \(\mathcal{L}\) and \(\mathcal{R}\) denote the left and right cotangent lifts on \(T^*H\). Moreover, if we let \(\orb\) denote a coadjoint orbit through \(\mu\in\mathfrak{h}_p^*\) and \(J\colon T^*H\rightarrow\mathfrak{h}_p^*\) the momentum map for the action of \(H_p\) on \(T^*H\), then the immersion restricted to \(J^{-1}(\orb)/(H_p)\) establishes a symplectomorphism between this symplectic orbit-reduced space and a coadjoint orbit in \(\mathfrak{s}^*\).
\end{thm}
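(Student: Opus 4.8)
The plan is to deduce this statement from the Reduction by Stages Theorem \cite{bigstages} applied to \(T^*S\) for the normal subgroup \(V\triangleleft S\), together with the two classical facts that the reduction of \(T^*G\) by its own translation action at a coadjoint orbit \(\orb\subset\mathfrak{g}^*\) is symplectomorphic to \(\orb\), and that reducing the cotangent bundle of a semidirect product by its vector part produces the cotangent bundle of the quotient group with its canonical structure. Concretely, I would equip \(T^*S\) with the cotangent lift of translation (left or right, matched so that the descended \(S\)-momentum map comes out as \eqref{slide}); its momentum map \(J_S\colon T^*S\to\mathfrak{s}^*=\mathfrak{h}^*\times V^*\) is translation back to the identity, and since \(V\) is a subgroup, the momentum map for the induced \(V\)-action is simply the \(V^*\)-component of \(J_S\).

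First I would trivialise \(T^*S\cong T^*H\times V\times V^*\) (using the left trivialisation on the \(H\)-factor), write out the symplectic form, set the \(V\)-momentum map equal to \(p\), and quotient the level set by \(V\). Because the bundle \(S\to S/V\cong H\) is trivial and \(V\) acts by translation in the \(V\)-slot, quotienting removes that slot and leaves exactly \(T^*H\) with its canonical symplectic form — in particular no magnetic/curvature term survives. From this identification one reads off the remaining data: the residual symmetry group is \(S_p/V\cong H_p\), acting on \(T^*H\) by cotangent lift of translation with momentum map \(J\colon T^*H\to\mathfrak{h}_p^*\); and the restriction of \(J_S\) to the level set descends precisely to the map \eqref{slide}, \(\eta\in T^*_aH\mapsto(\mathcal{L}_{a^{-1}}^*\eta,a^{-1}p)\), whose image sweeps out \(\mathfrak{h}^*\) over the \(H\)-orbit of \(p\).

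For the Poisson immersion claim I would check directly that \eqref{slide} is \(H_p\)-invariant and that two points share an image exactly when they lie in a common \(H_p\)-orbit, so that it descends to an injective immersion of the Poisson quotient \(T^*H/H_p\) into \(\mathfrak{s}^*\); it is a Poisson map because it is built from \(J_S\), which is Poisson, and the \(\pm\) records whether one began from the left or right cotangent lift, since these differ by the sign of the Lie--Poisson bracket. For the symplectomorphism claim, the Reduction by Stages Theorem identifies the two‑stage reduced space \(J^{-1}(\orb)/H_p\) with the one‑shot reduction of \(T^*S\) at the \(S\)-coadjoint orbit \(\orb_\nu\) through any \(\nu=(\mu,p)\) extending a point of \(\orb\) (the stages hypotheses being automatic here since \(V\) is abelian, or else invoke the semidirect form directly as in \cite{SDbetter}); by classical cotangent‑bundle reduction \(T^*S/\!/_{\orb_\nu}S\cong\orb_\nu\subset\mathfrak{s}^*\). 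Since this \(\orb_\nu\) is exactly the image under our immersion of \(J^{-1}(\orb)/H_p\), and a Poisson immersion carries symplectic leaves to symplectic leaves — and these reduced spaces are precisely the leaves on each side — the restricted map is a symplectomorphism onto the coadjoint orbit.

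The step I expect to be the main obstacle is the first stage: setting up the trivialisation of \(T^*S\) and the \(V\)-action carefully enough to verify simultaneously that \((T^*S)/\!/_pV\) is \(T^*H\) with \emph{no} residual term, that the residual \(H_p\)-action is the cotangent lift of translation with the claimed momentum map, and that the descended \(S\)-momentum map is exactly the slide \eqref{slide} with the correct \(\pm\) conventions. Once that bookkeeping is pinned down, the remainder is an assembly of standard reduction‑by‑stages and cotangent‑bundle‑reduction results.
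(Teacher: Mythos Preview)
The paper does not prove this theorem. It is quoted as a black-box result from the cited reference \cite{SDbetter} (the semidirect product reduction paper of Marsden, Ratiu and collaborators), and the paper immediately proceeds to \emph{apply} it to \(H=G_1\times G_2\), \(V=\Q\), \(p=1\). There is therefore nothing in the paper to compare your proposal against.

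For what it is worth, your sketch is the standard route and is essentially how the result is established in the literature: reduce \(T^*S\) first by the abelian normal subgroup \(V\) at \(p\) to get \(T^*H\) with its canonical form and residual \(H_p\)-action, then identify the second-stage reduced space with a coadjoint orbit of \(S\) via the one-shot cotangent reduction \(T^*S/\!/S\cong\) coadjoint orbit. Your identification of the main bookkeeping hazard (checking that no magnetic term appears and that the descended \(J_S\) is exactly \eqref{slide} with the right sign conventions) is accurate; those checks are routine because the bundle \(S\to H\) is trivial and \(V\) is abelian.
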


We apply this theorem directly to the task of reducing \(T^*(G_1\times G_2)\) by the diagonal subgroup \(G\) acting on either the left or right. To do this, we set \(H\) in the theorem to \(G_1\times G_2\) with the representation in \eqref{so4action} on \(V=\Q\). This semidirect product \(S\) is the simply-connected double cover over the special Euclidean group \(SE(4)\). Thanks to the inner product on \(\Q\), we are free to identify spaces with their duals, and verify that the isotropy subgroup of \(1\in\Q^*\) is indeed the diagonal subgroup \(G\). Implementing \eqref{slide} demonstrates that the left and right Poisson reduced spaces are given by the elements 
\begin{equation}\label{leftright}
(R_1,R_2,g_L)\quad\text{and}\quad(L_1,L_2,g_R)
\end{equation}
inside \(\mathfrak{s}^*_+\) and \(\mathfrak{s}^*_-\) respectively. Here we have introduced the respective left- and right-invariant quantities \(g_L=g_1^{-1}g_2\) and \(g_R=g_1g_2^{-1}\). The left reduced space \(M_\lambda\) written as an orbit-reduced space is \(J_L^{-1}(\orb)/G_L\), where \(\orb\) is the coadjoint orbit in \(\mathfrak{g}_L^*\) through \(\lambda\). Therefore, this reduced space is equal to the set of \((R_1,R_2,g_L)\) in \(\mathfrak{s}_+\) with \(|L_1+L_2|^2=|\lambda|^2\). This may be rewritten explicitly in terms of the left-invariant variables as
\begin{equation}\label{lamb}
|L_1+L_2|^2=|p_1g_1^{-1}+p_2g_2^{-1}|^2=|(g_1^{-1}{p}_1)(g_1^{-1}g_2)+(g_1^{-1}g_2)(g_2^{-1}{p}_2)|^2=|R_1g_L+g_LR_2|^2.
\end{equation}
The symplectic reduced space \(M_\lambda\) is thus the subset of \((R_1,R_2,g_L)\) in \(\mathfrak{s}^*_+\) with \(|g_L|^2=1\) and \(|R_1g_L+g_LR_2|^2=|\lambda|^2\). These two functions are the only Casimirs of \(\mathfrak{s}^*_+\). The geometry of the orbits is made clearer by applying the following transformation on \(\mathfrak{s}^*_+\) for \(g_L\ne0\)
\begin{equation}\label{transparent}
(R_1,R_2,g_L)\longmapsto\left((R_1g_L+g_LR_2)g_L^{-1},-R_1+g_LR_2g_L^{-1},g_L\right).
\end{equation}
One may now see that the reduced space is diffeomorphic to \(\orb\times\mathfrak{g}^*\times G\). An entirely similar argument can be made for the right reduced space as that above.
\begin{propn}\label{firstspaces}
	Let typical elements in \(\mathfrak{s}^*_\pm=\mathfrak{g}_1^*\times\mathfrak{g}_2^*\times\Q^*\) be denoted by \((A_1,A_2,g_D)\). There are two Casimir functions given by
	\begin{align*}
	C_1&=|g_D|^2, \\
	C_2&=|A_1g_D+g_DA_2|^2.
	\end{align*}
	The left and right reduced spaces \(M_\lambda\) and \(M_\rho\) are symplectomorphic to the coadjoint orbits in \(\mathfrak{s}^*_\pm\) given by setting \(C_1\) to \(1\), and \(C_2\) to \(|\lambda|^2\) and \(|\rho|^2\) respectively. In each case, the typical elements of the orbit may be identified with the left/right-invariant dynamic variables by setting the ambidextrous dummy variables \((A,D)\) to either \((R,L)\) or \((L,R)\) for the left and right reduced spaces respectively. The coadjoint orbits in question are generically diffeomorphic to \(S^2\times\R^3\times S^3\) for \(C_2\ne 0\), and to \(\R^3\times S^3\) for \(C_2=0\).
\end{propn}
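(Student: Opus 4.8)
The plan is to combine the semidirect product reduction already carried out above with one new ingredient, an explicit change of variables on $\mathfrak{s}^*_\pm$, and there are three things to do: recognise $C_1$ and $C_2$ as the Casimirs of the Lie--Poisson structure on $\mathfrak{s}^*_\pm$; invoke the Semidirect Product Reduction by Stages Theorem together with \eqref{leftright}--\eqref{lamb} to exhibit $M_\lambda$ and $M_\rho$ as the coadjoint orbits carved out by the stated values of $C_1$ and $C_2$; and read off the diffeomorphism type of those orbits in the generic and degenerate cases.

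For the first two points: $C_1=|g_D|^2$ is the squared norm of the $\Q^*$-slot, and since $G_1\times G_2$ acts on $\Q$ through $\Phi$ by isometries (the orthogonal $SO(4)$-action \eqref{so4action}), a short Lie--Poisson bracket computation shows it is Casimir; $C_2$ is the remaining Casimir, as recorded above (one may verify it directly, or note that on each Poisson submanifold $C_1^{-1}(c)$ it is expressible through $c$ and $|\lambda|^2$, respectively $|\rho|^2$, by a mild variant of \eqref{lamb}). For the reduced spaces themselves, applying \eqref{slide} with $H=G_1\times G_2$, $V=\Q$, $p=1$ and $H_p=\Delta G$ the diagonal subgroup yields the Poisson immersion whose image coordinates are $(R_1,R_2,g_L)\in\mathfrak{s}^*_+$ in the left case and $(L_1,L_2,g_R)\in\mathfrak{s}^*_-$ in the right, and, by the ``moreover'' clause of the theorem, a symplectomorphism of each orbit-reduced space onto a coadjoint orbit of $S$. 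The orbit-reduced space $J_L^{-1}(\orb)/G_L=M_\lambda$ is then exactly the leaf on which $C_1=|g_L|^2=1$ and, by \eqref{lamb}, $C_2=|R_1g_L+g_LR_2|^2=|L_1+L_2|^2=|\lambda|^2$; interchanging left and right gives $M_\rho\subset\mathfrak{s}^*_-$ with $C_1=1$ and $C_2=|\rho|^2$, and the ambidextrous substitution $(A,D)\mapsto(R,L)$ or $(L,R)$ just records which invariants occupy the $\mathfrak{g}^*$-slots. Since a joint level set of two Casimirs is a priori a union of coadjoint orbits, I still owe a connectedness argument, which the next step supplies.

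For the diffeomorphism type, apply the transformation \eqref{transparent}. On $C_1^{-1}(1)$, i.e.\ $g_D\in G\cong S^3$, set $b_1=(A_1g_D+g_DA_2)g_D^{-1}=A_1+\Ad_{g_D}A_2$ and $b_2=-A_1+\Ad_{g_D}A_2$; both lie in $\Imag\Q\cong\R^3$ because conjugation preserves the imaginary part, and $(A_1,A_2,g_D)\mapsto(b_1,b_2,g_D)$ is a diffeomorphism of $\mathfrak{g}^*\times\mathfrak{g}^*\times G$ with inverse $A_1=\tfrac12(b_1-b_2)$, $A_2=\Ad_{g_D^{-1}}\bigl(\tfrac12(b_1+b_2)\bigr)$. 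Multiplicativity of the quaternion norm gives $C_2=|b_1g_D|^2=|b_1|^2$ on $C_1^{-1}(1)$, so the orbit $\{C_1=1,\ C_2=c\}$ is $\{b_1\in\R^3:|b_1|^2=c\}\times\R^3\times S^3$: this is $S^2\times\R^3\times S^3$ when $c\neq0$ (with $\orb$, the coadjoint orbit through $\lambda\neq0$ in $\mathfrak{g}^*\cong\R^3$, a $2$-sphere) and $\{0\}\times\R^3\times S^3\cong\R^3\times S^3$ when $c=0$ (with $\orb=\{0\}$ a point); both are connected, closing the gap above.

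The main obstacle is not a single hard computation but the bookkeeping around it: keeping straight which semidirect product ($\mathfrak{s}^*_+$ versus $\mathfrak{s}^*_-$), which one-sided translation ($G_L$ versus $G_R$) and which invariants ($R_i$ versus $L_i$) belong together, and, relatedly, making sure the degenerate stratum $C_2=0$ genuinely collapses to $\R^3\times S^3$ rather than to something with a hidden extra factor. The change of variables \eqref{transparent} is precisely the device that renders both points transparent, which is why it is the crux of the argument.
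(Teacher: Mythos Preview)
Your proposal is correct and follows essentially the same route as the paper: you invoke the Semidirect Product Reduction by Stages Theorem with $H=G_1\times G_2$, $V=\Q$, $p=1$ to land in $\mathfrak{s}^*_\pm$, use \eqref{lamb} to identify the level-set conditions $C_1=1$, $C_2=|\lambda|^2$ (resp.\ $|\rho|^2$), and then apply the change of variables \eqref{transparent} to read off the diffeomorphism type. Your explicit connectedness check (ensuring each joint Casimir level is a single coadjoint orbit rather than a union) is a welcome addition that the paper leaves implicit.
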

In both the left and right reduced spaces the hamiltonian in \eqref{ham} descends through the reduction procedure to give the reduced hamiltonian on \(\mathfrak{s}^*_\pm\)
\begin{equation}
\label{reduced_ham}
H(A_1,A_2,g_D)=\frac{|A_1|^2}{2m_1}+\frac{|A_2|^2}{2m_2}+V(g_D).
\end{equation}
The function \({V}(g_D)\) is the \emph{reduced potential} defined by \({V}(g_D)=V(1,g_D)\), which for both the left and right-reduced spaces is equal to \(V(1,g_L)=V(g_R,1)=V(g_1,g_2)\) using the left- and right-invariance of \(V\). 

We wish to highlight an interesting feature for when one of the left or right momenta is zero. The corresponding reduced space \(\R^3\times S^3\) is in fact symplectomorphic to \(T^*S^3\) with the canonical symplectic form \cite{bigstages}[Chapter 4]. Furthermore, for when the masses are equal, the reduced hamiltonian on \(T^*S^3\) gives the same dynamical system as the Kepler one-body problem with the second body fixed at \(1\in S^3\). This should be contrasted with what was said at the beginning: in Euclidean space the 2-body problem may be reduced to the Kepler problem by transforming into a centre of mass frame. Here we have a kind of analogue to this, that when one of the left or right momenta is zero, the corresponding reduced space gives the standard Kepler problem on the sphere.

\subsection{The full reduced space}
Now we consider the task of reducing the left/right reduced space by the residual right/left symmetry. Without any loss of generality, we focus on reducing the left Poisson reduced space \(M/G_L\) by the group \(G_R\) of right translations. From the definitions of the left-invariant variables this group action descends to \(\mathfrak{s}^*_+\) as
\begin{equation}
\label{gr_action}
r\cdot(R_1,R_2,g_L)=(rR_1r^{-1},rR_2r^{-1},rg_Lr^{-1})
\end{equation}
for \(r\in G_R\). From the Commuting Reduction Theorem the momentum map for this action is also given by the total right momentum \(\rho=R_1+R_2\).

We must now confront the issue that this action is not free. By writing an element as \((R_1,R_2,\overline{g}_L,r)\), where we have decomposed \(g_L\) into its imaginary and real parts, the \(G_R\)-action on \(\mathfrak{s}^*\) decomposes into the irreducible pieces \(\mathfrak{g}_1\times\mathfrak{g}_2\times\mathfrak{g}_3\times\R\); that is, three copies of the adjoint representation of \(G\), and the trivial representation. The adjoint representation of \(G\) factors through the double cover \(G\rightarrow SO(3)\) to give the standard vector representation of \(SO(3)\) on \(\mathfrak{g}=\Imag\Q\cong\R^3\). It follows that this action fails to be free whenever \(R_1\), \(R_2\) and \(\overline{g}_L\) are colinear. In order to handle these singular cases we will employ the method of universal reduction from \cite{universalred} via the use of invariant theory to obtain the orbit quotient as a semialgebraic variety. This technique is demonstrated in, for example \cite{examples}, and similarly in \cite{cushbook} for the second stage of reduction for the ordinary Lagrange top. 

Temporarily denote elements in \(\mathfrak{g}_1\times\mathfrak{g}_2\times\mathfrak{g}_3\) by \((v_1,v_2,v_3)\). The First Fundamental Theorem of Invariant Theory for the special orthogonal group \cite{kraft} tells us that the invariant ring is generated by the pairwise inner products \(k_{ij}=\langle v_i,v_j\rangle\) for \(i\le j\), and the determinant \(\delta=\langle v_1\times v_2,v_3\rangle\). These are not independent as they satisfy the algebraic relation \(\delta^2=\text{det}(k_{ij})\). As the group \(G_R\) is compact, the orbits are separated by the values taken by the generators of the invariant ring, and hence, the quotient \(\mathfrak{s}^*_+/G_R\) may be identified with the image of the Hilbert map
\begin{equation*}
\sigma\colon \mathfrak{s}^*_+\longrightarrow\R^8;\quad(v_1,v_2,v_3,r)\longmapsto\left(\{k_{ij}\}_{i\le j},\delta,r\right).
\end{equation*}
The image of this map is the semialgebraic variety defined by those points satisfying \(\delta^2=\text{det}(k_{ij})\), and the inequalities \(k_{ij}^2\le k_{ii}k_{jj}\) for each \((i,j)\)-pair.

Since the momentum \(\rho=R_1+R_2\) is conserved, the \(G_R\)-invariant quantity \(|\rho|^2\) descends to \(\mathfrak{s}^*_+/G_R\) to give an additional Casimir
\[
C_3=|R_1+R_2|^2=k_{11}+2k_{12}+k_{22}.
\]
The two Casimirs \(C_1\) and \(C_2\) in Proposition~\ref{firstspaces} also descend to the full reduced space. The first of these is easily seen to be 
\[
C_1=|g_L|^2=k_{33}+r^2.
\]
The second however, requires a special effort to express in terms of the invariant generators.
\begin{lem}
	The Casimir \(C_2=|R_1g_L+g_LR_2|^2\) may be expressed in terms of the generators of the \(G_R\)-invariant ring on \(\mathfrak{s}^*_+\) as
	\begin{equation*}
	C_2=(k_{33}+r^2)(k_{11}+k_{22})+2k_{12}(r^2-k_{33})+4k_{13}k_{23}-4r\delta.
	\end{equation*}
\end{lem}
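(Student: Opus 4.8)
The plan is to reduce everything to the single quaternion identity $ab=-\langle a,b\rangle+a\times b$ valid for purely imaginary $a,b$ (this is just the real/imaginary splitting of the quaternion product, consistent with $[a,b]=2(a\times b)$), together with the two classical vector identities: the cyclic antisymmetry of the scalar triple product $\langle a,b\times c\rangle=\langle c,a\times b\rangle=-\langle b,a\times c\rangle$, and the Lagrange identity $|a\times b|^2=|a|^2|b|^2-\langle a,b\rangle^2$. Throughout I identify $v_1=R_1$, $v_2=R_2$, $v_3=\overline{g}_L$ and write $g_L=r+\overline{g}_L$ with $r$ the real part, so that $k_{ij}=\langle v_i,v_j\rangle$ and $\delta=\langle v_1\times v_2,v_3\rangle=\langle\overline{g}_L,R_1\times R_2\rangle$. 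The identity is to be established on all of $\mathfrak{s}^*_+$, before imposing the Casimir constraint $C_1=k_{33}+r^2=1$.

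First I would split off the real part of $g_L$:
\[
R_1g_L+g_LR_2=r(R_1+R_2)+\bigl(R_1\overline{g}_L+\overline{g}_LR_2\bigr).
\]
The first summand is purely imaginary. For the second, applying $ab=-\langle a,b\rangle+a\times b$ to each of the two products and using $R_1\times\overline{g}_L+\overline{g}_L\times R_2=(R_1-R_2)\times\overline{g}_L$ gives
\[
R_1\overline{g}_L+\overline{g}_LR_2=-(k_{13}+k_{23})+(R_1-R_2)\times\overline{g}_L.
\]
Hence $R_1g_L+g_LR_2$ has real part $-(k_{13}+k_{23})$ and imaginary part $r(R_1+R_2)+(R_1-R_2)\times\overline{g}_L$, and I would compute $C_2$ as the square of the real part plus the square of the norm of the imaginary part.

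The square of the real part is $(k_{13}+k_{23})^2$. Expanding the imaginary part gives three contributions: $r^2|R_1+R_2|^2=r^2(k_{11}+2k_{12}+k_{22})$; the cross term $2r\langle R_1+R_2,(R_1-R_2)\times\overline{g}_L\rangle$; and $|(R_1-R_2)\times\overline{g}_L|^2$. The cross term is the only place requiring a little care: expanding it into four scalar triple products, the two ``diagonal'' pieces $\langle R_i,R_i\times\overline{g}_L\rangle$ vanish, while the two mixed pieces are $-\delta$ and $-\delta$ by the antisymmetry of the triple product, so the cross term equals $-4r\delta$. For the last contribution the Lagrange identity yields $|(R_1-R_2)\times\overline{g}_L|^2=(k_{11}-2k_{12}+k_{22})k_{33}-(k_{13}-k_{23})^2$.

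Finally I would collect terms: $(k_{13}+k_{23})^2-(k_{13}-k_{23})^2=4k_{13}k_{23}$; the $r^2$-terms and $k_{33}$-terms regroup as $(k_{33}+r^2)(k_{11}+k_{22})+2k_{12}(r^2-k_{33})$; and the cross term supplies the remaining $-4r\delta$, which is exactly the asserted expression. I do not expect any genuine obstacle here — it is an honest but short computation; the two points to watch are the sign bookkeeping in the cross term and the fact that the Euclidean inner product on $\Q$ annihilates the real part when pairing against a purely imaginary quaternion, so that only the imaginary part of $R_1g_L+g_LR_2$ contributes beyond the scalar $-(k_{13}+k_{23})$.
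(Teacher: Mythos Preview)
Your proof is correct. It is, however, organised differently from the paper's. The paper expands $C_2=|R_1g_L|^2+|g_LR_2|^2+2\langle R_1g_L,g_LR_2\rangle$, immediately obtaining $(k_{33}+r^2)(k_{11}+k_{22})$ from the first two terms by multiplicativity of the norm, and then isolates the cross term $\langle R_1g_L,g_LR_2\rangle$; after substituting $g_L=r+\overline{g}_L$ this leads to $\langle R_1\overline{g}_L,\overline{g}_LR_2\rangle$, which the paper rewrites via an ad hoc quaternion identity as $2\langle R_1\times\overline{g}_L,\overline{g}_L\times R_2\rangle+k_{33}k_{12}$ and then finishes with the vector quadruple product. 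You instead compute the quaternion $R_1g_L+g_LR_2$ itself, reading off its real part $-(k_{13}+k_{23})$ and its imaginary part $r(R_1+R_2)+(R_1-R_2)\times\overline{g}_L$, and then square. Your route is slightly more elementary in that it never needs the quadruple product, only the Lagrange identity $|a\times b|^2=|a|^2|b|^2-\langle a,b\rangle^2$ and the cyclic behaviour of the scalar triple product; the paper's route has the mild advantage that the leading term $(k_{33}+r^2)(k_{11}+k_{22})$ drops out at once without any regrouping. Either way the bookkeeping is the same size, and your handling of the cross term $-4r\delta$ and of the cancellation $(k_{13}+k_{23})^2-(k_{13}-k_{23})^2=4k_{13}k_{23}$ is clean and correct.
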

\begin{proof}
	Expanding \(C_2\) gives
	\[
	|R_1g_L|^2+|g_LR_2|^2+2\langle R_1g_L,g_LR_2\rangle=(k_{33}+r^2)(k_{11}+k_{22})+2\langle R_1g_L,g_LR_2\rangle.
	\]
	Rewriting \(g_L\) as \(r+\overline{g}_L\), the final term above may be written as
	\[
	2r^2\langle R_1,R_2\rangle+2r\left(\langle R_1\overline{g}_L,R_2\rangle+\langle R_1,\overline{g}_LR_2\rangle\right)+2\langle R_1\overline{g}_L,\overline{g}_LR_2\rangle=2r^2k_{12}-4r\delta+2\langle R_1\overline{g}_L,\overline{g}_LR_2\rangle.
	\]
	By multiplying out the cross-product terms, it is possible to establish the following identity
		\[
	2\langle R_1\overline{g}_L,\overline{g}_LR_2\rangle=4\langle R_1\times\overline{g}_L,\overline{g}_L\times R_2\rangle+2\langle\overline{g}_L,\overline{g}_L\rangle\langle R_1,R_2\rangle.
	\]
	Finally, using the vector quadruple product \(\langle a\times b,c\times d\rangle=\langle a,c\rangle\langle b,d\rangle-\langle a,d\rangle\langle b,c\rangle\) in the expression above gives the desired result.
\end{proof}
\begin{thm}\label{full_red}
	The Poisson reduced space \(\mathfrak{s}^*_+/G_R\) is the semialgebraic variety given by coordinates \((\{k_{ij}\}_{i\le j},\delta,r)\) in \(\R^8\) satisfying \(\delta^2=\det(k_{ij})\) and \(k_{ij}^2\le k_{ii}k_{jj}\). There are three Casimirs,
	\begin{align*}
	C_1&=k_{33}+r^2, \\
	C_2&=(k_{33}+r^2)(k_{11}+k_{22})+2k_{12}(r^2-k_{33})+4k_{13}k_{23}-4r\delta,\\
	C_3&=k_{11}+k_{22}+2k_{12}.
	\end{align*}
	The full reduced space \((M_\lambda)_\rho\) is obtained by setting \(C_1=1\), \(C_2=|\lambda|^2\), and \(C_3=|\rho|^2\).
\end{thm}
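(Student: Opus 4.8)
The plan is to assemble the ingredients already in place above: the decomposition of the $G_R$-action into three copies of the vector representation of $SO(3)$ plus a trivial summand, the First Fundamental Theorem of Invariant Theory, the compactness of $G_R$, the two Casimirs of $\mathfrak{s}^*_+$ from Proposition~\ref{firstspaces}, and the preceding Lemma. First I would fix the identification of a point of $\mathfrak{s}^*_+$ with $(v_1,v_2,v_3,r)=(R_1,R_2,\overline{g}_L,r)$ and recall that, because $G_R$ is compact, its orbits are separated by the invariant ring, which by the First Fundamental Theorem is generated by $\{k_{ij}\}_{i\le j}$, $\delta$ and $r$. Hence the Hilbert map $\sigma$ realises $\mathfrak{s}^*_+/G_R$ as its image, and (as noted above) this image is exactly the semialgebraic variety cut out by $\delta^2=\det(k_{ij})$ together with the Cauchy--Schwarz inequalities $k_{ij}^2\le k_{ii}k_{jj}$. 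The Poisson structure it carries is that of the $G_R$-invariant functions on $\mathfrak{s}^*_+$ --- a Poisson subalgebra since $G_R$ acts by Poisson maps --- so it is determined by the brackets of the generators $k_{ij}$, $\delta$, $r$, computed from the Poisson structure of $\mathfrak{s}^*_+$.

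Next I would pin down the three Casimirs. The functions $C_1=|g_D|^2$ and $C_2=|A_1g_D+g_DA_2|^2$ are the Casimirs of $\mathfrak{s}^*_+$ in Proposition~\ref{firstspaces}; being $G_R$-invariant they descend, and a central function pushes forward to a central function, so both remain Casimirs on the quotient. The function $C_3=|\rho|^2=|R_1+R_2|^2$ is built from the momentum map $J_R=R_1+R_2$ of the $G_R$-action, so by the momentum-map identity it Poisson-commutes with every $G_R$-invariant function and is therefore a Casimir of the quotient. Writing each in the generators is then immediate: $g_L=r+\overline{g}_L$ gives $C_1=k_{33}+r^2$, expanding $|R_1+R_2|^2$ gives $C_3=k_{11}+2k_{12}+k_{22}$, and $C_2$ is the expression furnished by the Lemma.

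Finally I would read off the full reduced space. By Proposition~\ref{firstspaces} the left reduced space $M_\lambda$ sits inside $\mathfrak{s}^*_+$ as the subvariety $\{C_1=1,\ C_2=|\lambda|^2\}$, on which $G_R$ acts by the restriction of \eqref{gr_action}. Since every coadjoint orbit of $G_R\cong SO(3)$ in $\mathfrak{g}_R^*\cong\R^3$ is a sphere $\{|\cdot|^2=|\rho|^2\}$, point reduction of $M_\lambda$ at $\rho$ agrees with orbit reduction, namely with $\bigl(J_R^{-1}(\orb_\rho)\cap M_\lambda\bigr)/G_R$ for $\orb_\rho$ the coadjoint orbit through $\rho$; and $J_R^{-1}(\orb_\rho)=\{|R_1+R_2|^2=|\rho|^2\}=\{C_3=|\rho|^2\}$. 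Hence $(M_\lambda)_\rho$ is the $\sigma$-image of $\{C_1=1,\ C_2=|\lambda|^2,\ C_3=|\rho|^2\}$, which --- since $C_1$, $C_2$, $C_3$ are polynomials in $k_{ij},\delta,r$ --- is precisely the slice of the semialgebraic variety cut out by those three equations. There is no serious computational obstacle remaining, as the invariant theory and the $C_2$ identity are already done; the one point requiring care is this last step, where one must remember that $(M_\lambda)_\rho$ is the reduction at the momentum \emph{value} $\rho$ and invoke the equivariance of $J_R$ together with the spherical shape of the $SO(3)$ coadjoint orbits to trade point reduction for the single Casimir constraint $C_3=|\rho|^2$.
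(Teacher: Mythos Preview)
Your proposal is correct and follows essentially the same route as the paper, which assembles the theorem from the preceding discussion: the decomposition of the $G_R$-action, the First Fundamental Theorem and compactness to realise the quotient as the stated semialgebraic variety, Proposition~\ref{firstspaces} for $C_1$ and $C_2$, the Lemma for the expression of $C_2$, and the observation that $C_3=|\rho|^2$ descends from the $G_R$-momentum. Your write-up is slightly more explicit than the paper on two points the paper leaves implicit---why the $C_i$ remain Casimirs on the quotient, and the passage from point to orbit reduction via the spherical $SO(3)$ coadjoint orbits---and both additions are correct.
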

For \(|\lambda|,|\rho|\ne 0\), these typical reduced spaces are 4-dimensional. The algebraic awkwardness of the Casimir \(C_2\) together with the relation \(\delta^2=\text{det}(k_{ij})\) makes it difficult to grasp the geometry of these reduced spaces. Indeed, it would be of considerable interest to be able to say more about them. Nonetheless, below we describe the degenerate 2-dimensional reduced spaces.

Consider the full reduced space \((M_\lambda)_\rho\) for \(\rho=0\). By applying the algebraic inequalities in Theorem~\ref{full_red} for \(C_3=0\) we obtain an additional two constraints: \(k_{11}=k_{22}\) and \(k_{13}=-k_{23}\). After eliminating variables the reduced space is found to be homeomorphic to the set of points \((k_{11},k_{13},\theta)\) satisfying
\begin{equation}
\label{kepler_spaces}
4k_{11}=\frac{|\lambda|^2+4k_{13}^2}{\sin^2\theta},
\end{equation}
where we write \(k_{33}=\sin^2\theta\). When \(|\lambda|\) is non-zero this leaf is homeomorphic to \(\R^2\). On the other hand, if \(\lambda=0\) this leaf degenerates into the singular canoe shown in Figure~\ref{bowl}. As we have previously remarked, the reduced spaces \(M_0\) are symplectomorphic to \(T^*S^3\), and for when the masses are equal the resulting dynamical system is the Kepler problem on the sphere. These reduced spaces we have described therefore coincide with those for the Kepler problem.

\begin{figure}
	\centering
	\begin{subfigure}{0.4\textwidth}
		\includegraphics[width=\textwidth]{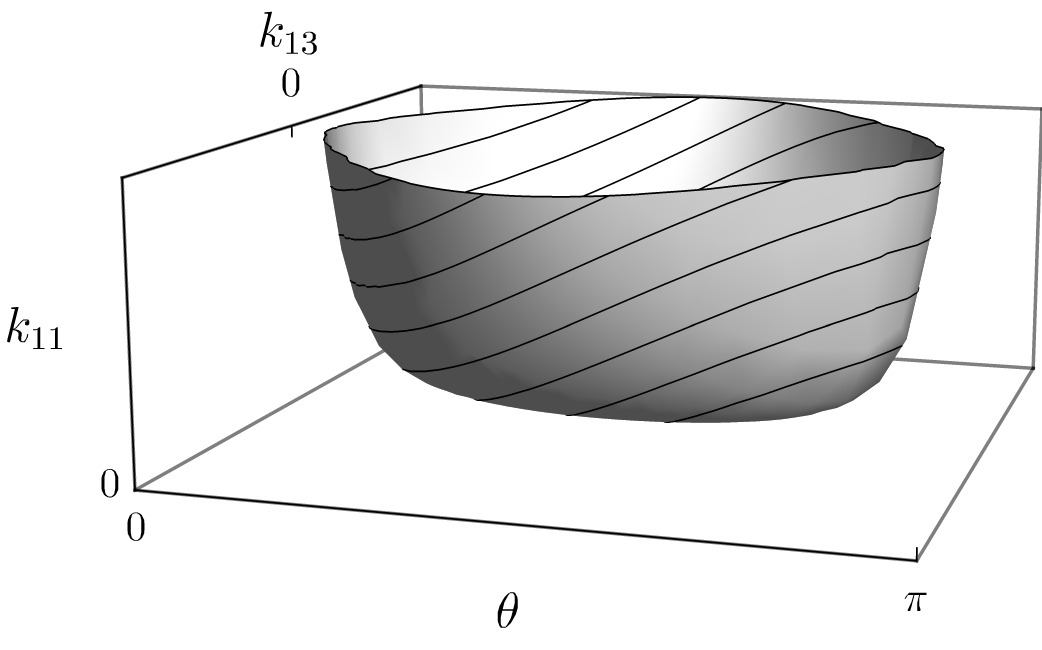}
	\end{subfigure}
	\qquad\quad
	\begin{subfigure}{0.4\textwidth}
		\includegraphics[width=\textwidth]{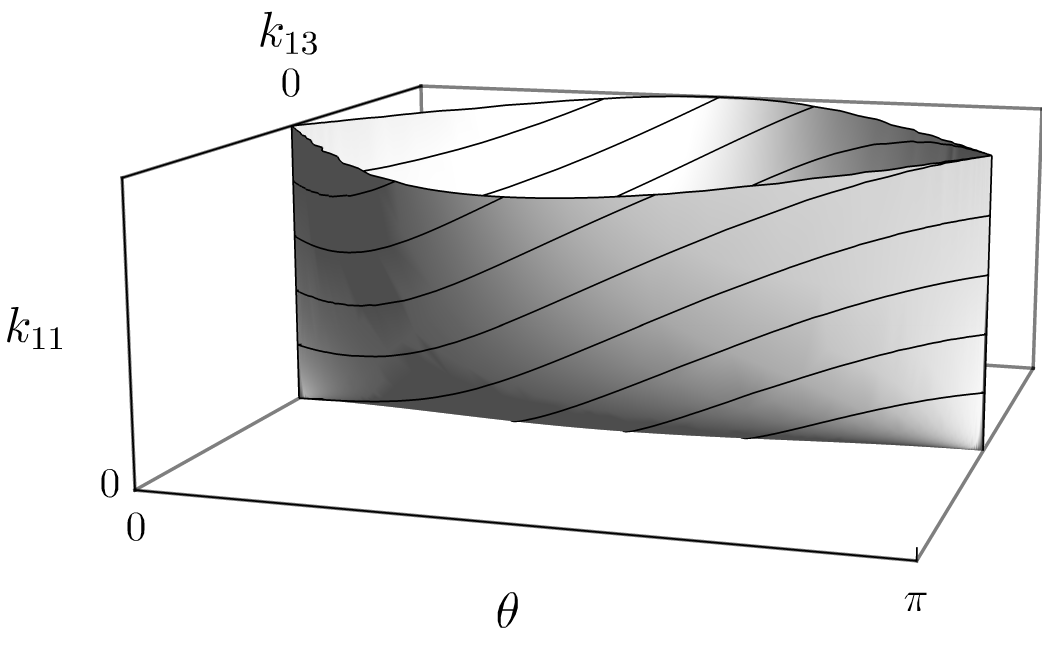}
	\end{subfigure}
	\caption{\label{bowl} The full reduced spaces \((M_\lambda)_\rho\) when \(\rho=0\). These spaces are 2-dimensional and homeomorphic to the plane for \(\lambda\ne 0\) as shown in the figure on the left. The leaf degenerates at \(\lambda=0\) into the canoe as shown on the right. The contours are the level sets of the altered hamiltonian in \eqref{alt_ham} for the Lagrange top.}
\end{figure}

\subsection{The singular strata}

An advantage of using invariants to describe the reduced space as a semialgebraic variety, is that it includes those points at which the action is not free. Consequently, these reduced spaces are not smooth in general, but stratified symplectic spaces. The theory of such stratified spaces is detailed in \cite{sjamaar}. It is shown that the strata of a reduced space, which are invariant under the dynamics, correspond to the different possible isotropy subgroups of the action. We now discuss each of the possible isotropy subgroups for the \(G_R\)-action in \eqref{gr_action} and their corresponding strata in turn.

From Proposition~\ref{crit}, the action is free whenever \(|\lambda|\ne|\rho|\) and not free precisely on those points which are cocircular. It follows that the reduced spaces \(M_{\lambda,\rho}\) for \(|\lambda|\ne|\rho|\) consist of a single stratum and are thus bonafide smooth manifolds. For when \(|\lambda|=|\rho|\), the reduced space contains an open dense stratum corresponding to the non-cocircular points at which the action is free.

The stratum corresponding to when the isotropy subgroup is all of \(G_R\) is for when \(R_1=R_2=\overline{g}_L=0\), and thus consists solely of the two points \(r=\pm 1\), where the other generators are zero. These two points are the corners of the canoe in \(M_{0,0}\) in Figure~\ref{bowl}, and correspond to the states where the two particles are motionless and either antipodal, or in the same position.

The strata corresponding to when the isotropy subgroup is \(SO(2)\) are for those points where \((R_1,R_2,\overline{g}_L)\) are colinear and not all zero. It follows that the generators of the invariant ring satisfy three further relations, given by changing the inequalities in Theorem~\ref{full_red} into equalities. In fact, as \(\delta=0\), and since \(|\lambda|=|\rho|\) at this point, the relation \(k_{11}k_{33}=k_{13}^2\) is not independent and so we have 6 constraints in total. Indeed, after eliminating variables one may show that there are two degrees of freedom given by \(k_{11}+k_{22}\) and \(\theta\), where we are writing \(r=\cos\theta\). For when \(|\lambda|=|\rho|\ne 0\) this stratum is homeomorphic to a cylinder as shown in Figure~\ref{tube}, and degenerates into the canoe, minus the corners, when the momentum is zero. These strata correspond to the cocircular configurations of the particles.
\begin{figure}
	\centering
	\begin{subfigure}{0.3\textwidth}
		\includegraphics[width=0.7\textwidth]{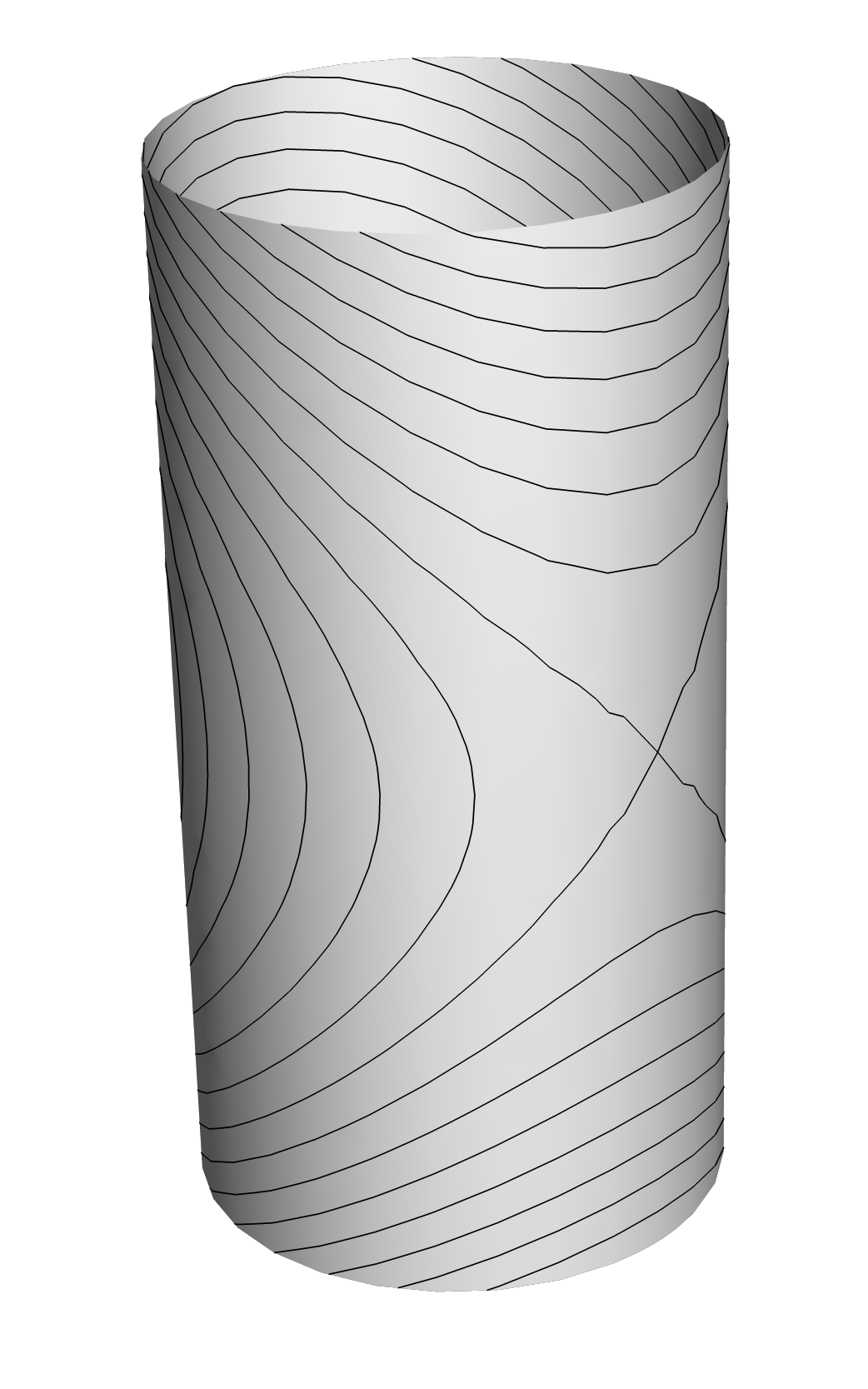}
	\end{subfigure}
	~
	\begin{subfigure}{0.3\textwidth}
		\includegraphics[width=\textwidth]{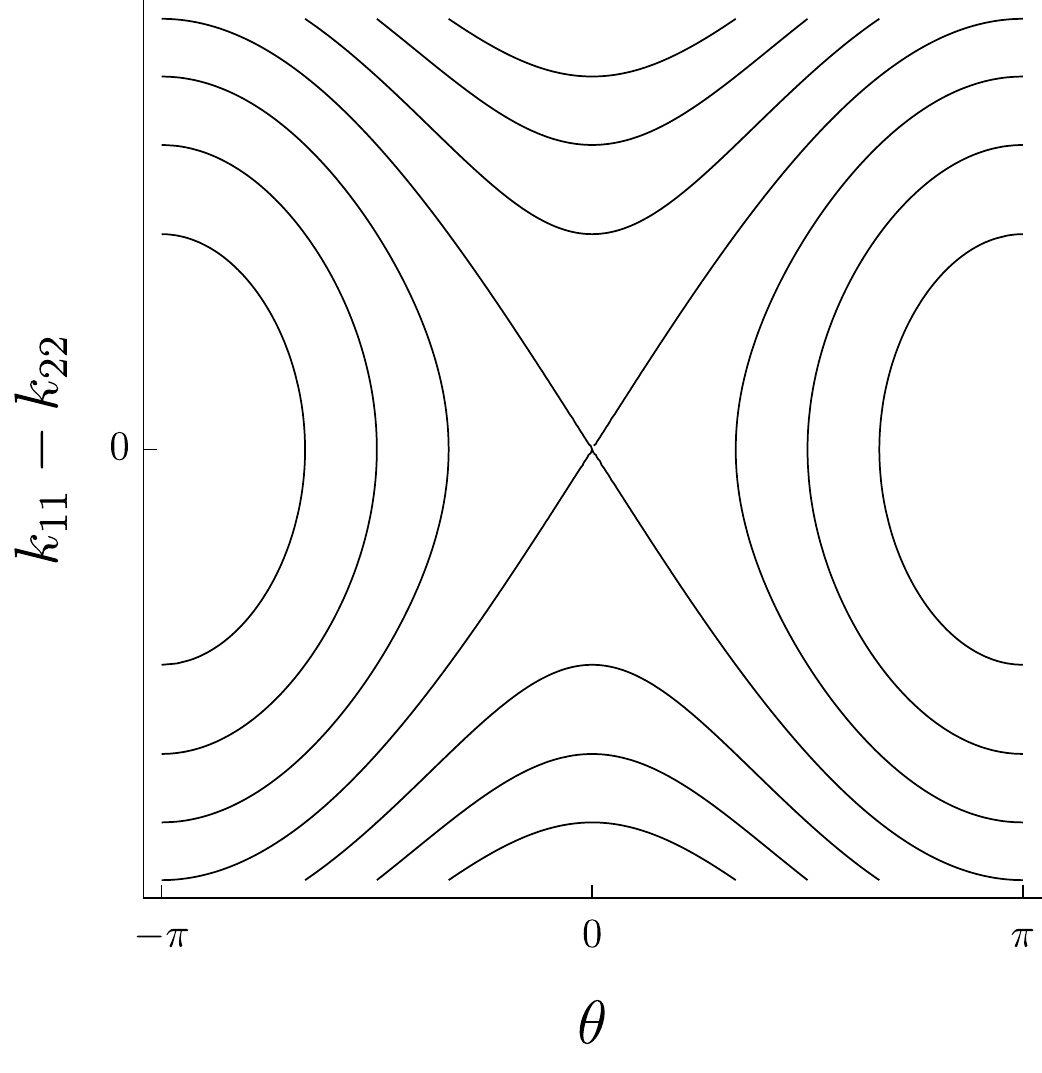}	
	\end{subfigure}
	\caption{\label{tube}The singular stratum inside \(M_{\lambda,\rho}\) for \(|\lambda|=|\rho|\ne 0\) is homeomorphic to a cylinder. In this figure we have added contours for the level sets of the altered hamiltonian in \eqref{alt_ham} for the Lagrange top. }
\end{figure}

\subsection{The equations of motion and the Poisson structure}
 We will now derive hamilton's equations, first on the left and right reduced spaces, and then on the full reduced space. To begin with, we need the Poisson structure between two functions \(f\) and \(g\) on \(\mathfrak{s}^*_\pm\). By the definition of the Lie-Poisson structure, \(\{f,g\}\) evaluated at \((A_1,A_2,g_D)\) in \(\mathfrak{s}^*_\pm\) is given in \cite{semidirect} by
\begin{equation}\label{sbracket}
\pm\langle A_1,[\nabla_1f,\nabla_1g]\rangle\pm\langle A_2,[\nabla_2f,\nabla_2g]\rangle\pm\langle g_D,(\nabla_1 f\nabla_3g-\nabla_3g\nabla_2f)-(\nabla_1 g\nabla_3f-\nabla_3f\nabla_2g)\rangle.
\end{equation}
Here we have written \(\nabla_i\) for \(i=1,2,3\) to mean the gradient of a function on \(\mathfrak{s}^*=\mathfrak{g}_1^*\times\mathfrak{g}_2^*\times\Q^*\) with respect to the three component spaces. The equations of motion may be derived from the Poisson bracket, where the convention we use is \(\dot{f}=\{H,f\}\). Before doing this, we pause to consider the term \(\nabla_3{V}\) from the hamiltonian in \eqref{reduced_ham}. Since \({V}=V(1,g_D)\) is a function of the distance from \(1\) to \(g_D\), it is only a function of the real part \(r\) of \(g_D\). The gradient \(\nabla_3{V}\) is therefore the purely real quaternion \(d{V}/dr\). Armed with this foresight, we may proceed to write down hamilton's equations on each of \(\mathfrak{s}^*_+\) and \(\mathfrak{s}^*_-\).
\begin{equation}\label{Ham_eqns}
\arraycolsep=15pt\def\arraystretch{2}
\begin{array}{ll}
\dot{R}_1=+f(r)\overline{g}_L & \dot{L}_1=-f(r)\overline{g}_R\\
\dot{R}_2=-f(r)\overline{g}_L& \dot{L}_2=+f(r)\overline{g}_R\\
\displaystyle\dot{g}_L=-\frac{R_1}{m_1}g_L+  g_L\frac{R_2}{m_2} &
\displaystyle\dot{g}_R=+\frac{L_1}{m_1}g_R- g_R\frac{L_2}{m_2}
\end{array}
\end{equation}
Here we are writing \(\overline{g}_D\) to mean the imaginary part of \(g_D\), and suggestively abbreviating \(-d{V}/dr\) with \(f(r)\) for \emph{force}. Using these equations together with the definitions of the generators of the invariant ring, we can write the full reduced equations of motion in \(\mathfrak{s}^*/G_R\), which are given below.
\begin{equation}\label{fulll_red_eqns}
\def\arraystretch{2.4}
\begin{array}{ll}
\displaystyle\dot{k}_{11}=2fk_{13}&\displaystyle\dot{r}=\frac{k_{13}}{m_1}-\frac{k_{23}}{m_2} \\\displaystyle
\dot{k}_{12}=f(k_{23}-k_{13})&\displaystyle\dot{\delta}=
\left(\frac{k_{12}k_{13}-k_{11}k_{23}}{m_1}\right)+\left(\frac{k_{13}k_{22}-k_{12}k_{23}}{m_2}\right) \\\displaystyle
\dot{k}_{13}=fk_{33}-r\left(\frac{k_{11}}{m_1}-\frac{k_{12}}{m_2}\right)-\frac{\delta}{m_2} &\displaystyle\\\displaystyle
\dot{k}_{22}=-2fk_{23}&\displaystyle\\\displaystyle
\dot{k}_{23}=-fk_{33}-r\left(\frac{k_{12}}{m_1}-\frac{k_{22}}{m_2}\right)+\frac{\delta}{m_1} &\displaystyle\\\displaystyle
\dot{k}_{33}=2r\left(\frac{k_{23}}{m_2}-\frac{k_{13}}{m_1}\right)&
\end{array}
\end{equation}
These equations give the flow generated by the hamiltonian in \eqref{reduced_ham}, which descends to \(\mathfrak{s}^*_+/G_R\) as
\begin{equation}
\label{full_red_ham}
\frac{k_{11}}{2m_1}+\frac{k_{22}}{2m_2}+V(r).
\end{equation}
It is important to appreciate that although these reduced spaces are considerably smaller than the original phase space, the dimension dropping from 12 to 4, there is a trade-off: the Poisson bracket on \(\mathfrak{s}^*_+/G_R\) between the generators is extremely cumbersome. By definition, this Poisson bracket descends from the bracket on \(\mathfrak{s}^*_+\) as given in \eqref{sbracket}. The difficulty lies in the fact that the invariants are quadratic in \(\mathfrak{s}^*_+\) and thus the bracket between them is cubic. This makes deriving a general formula for the Poisson bracket on \(\mathfrak{s}^*_+/G_R\) a rather Herculean task, and one that this author has failed to complete. As consolation, we offer instead the structure matrix in Table~\ref{table} listing the Poisson bracket between the generators of the invariant ring. It should be emphasised that this Poisson bracket should only be expected to satisfy the Jacobi identity and form a Lie algebra when the generators satisfy the algebraic relations in Theorem~\ref{full_red}.
\begin{table}
	\begin{align*}
	&
	\arraycolsep=5pt\def\arraystretch{1.4}
	\begin{array}{c|cccccc}
	\{~,~\} & k_{11} & k_{12} & k_{13} & k_{22} & k_{23} & k_{33} \\  \hline
	k_{11} & \cdot & 0 & -2rk_{11} & 0 & 2\delta-2rk_{12} & -4rk_{13} \\
	k_{12} & \cdot & \cdot & r(k_{11}-k_{12}) & 0 & r(k_{12}-k_{22}) & 2r(k_{13}-k_{23})\\
	k_{13} & \cdot & \cdot & \cdot & 2\delta-2rk_{12} & -r(k_{13}+k_{23}) & -2rk_{33}\\
	k_{22} & \cdot & \cdot & \cdot & \cdot & 2rk_{22} & 4rk_{23}\\
	k_{23} & \cdot & \cdot & \cdot & \cdot & \cdot & 2rk_{33} \\
	k_{23} & \cdot & \cdot & \cdot & \cdot & \cdot & \cdot \\ 
	\end{array}
	\\ \\&
	\arraycolsep=5pt\def\arraystretch{1.4}
	\begin{array}{c|ccc}
	\{k_{ij},r\} & 1 & 2 & 3 \\ \hline
	1 & 2k_{13} & k_{23}-k_{13} & k_{33} \\
	2 & \cdot & -2k_{23} & -k_{33}\\
	3 & \cdot & \cdot & 0 \\
	\end{array}
	\\ \\&
	\arraycolsep=5pt\def\arraystretch{1.4}
	\begin{array}{c|ccc}
	\{k_{ij},\delta\} & 1 & 2 & 3 \\ \hline
	1 & 2(k_{12}k_{13}-k_{11}k_{23}) & (k_{11}+k_{12})k_{23}-(k_{12}+k_{22})k_{13} &(k_{11}+k_{12})k_{33}-(k_{13}+k_{23})k_{13} \\
	2 & \cdot & 2(k_{13}k_{22}-k_{12}k_{23}) & (k_{13}+k_{23})k_{23}-(k_{12}+k_{22})k_{33}\\
	3 & \cdot & \cdot & 0 \\ 
	\end{array}
	\\ \\&
	\arraycolsep=5pt\def\arraystretch{1.4}
	\begin{array}{c|c}
	\{~,~\} & \delta \\ \hline
	r & 0 
	\end{array}
	\end{align*}
	\caption{\label{table} The Poisson bracket between generators of the \(G_R\)-invariant ring on \(\mathfrak{s}^*_+\) and thus the Poisson structure on the full reduced space \(\mathfrak{s}^*_+/G_R\).}
\end{table}
\subsection{A reprise of the Lagrange top}
The hamiltonian given in \eqref{lagrangeham} for the Lagrange top, descends through this whole reduction procedure to give
\begin{equation}\label{Lagrange_ham}
H=\frac{1+\alpha}{4}(k_{11}+k_{22})+\frac{1-\alpha}{2}k_{12}+\gamma r.
\end{equation}
This is of course, as we have already remarked, different to the 2-body hamiltonian in \eqref{full_red_ham}. However, the two hamiltonians differ by a constant multiple of \(C_3\). Since a Casimir trivially generates a stationary flow, the flow on the full reduced space for the Lagrange top is equivalently the flow generated by the altered hamiltonian
\begin{equation}\label{alt_ham}
\widetilde{H}=H+\frac{(\alpha-1)}{4}C_3=\frac{\alpha}{2}(k_{11}+k_{22})+\gamma r.
\end{equation}
Rather remarkably, one sees that, although the flows on the left and right reduced spaces for the Lagrange top and the 2-body problem are different, they are the same on the full reduced space. In particular, the fully reduced Lagrange top dynamics are precisely the same as that for the 2-body problem, where \(m_1=m_2=\alpha^{-1}\), and \(V(r)=\gamma r\). We can therefore continue in generality to consider a hamiltonian of the form in \eqref{full_red_ham}, and in doing so, will simultaneously treat both the 2-body problem and the Lagrange top.

For the Lagrange top, \(r=\cos\theta\) where \(\theta\) is the angle which the body axis makes with the vertical. This observation combined with the singular stratum in Figure~\ref{tube} allows us to say some interesting things about the dynamics. Firstly, let us explain the meaning of these singular strata for the Lagrange top. For the 2-body problem the \(G_R\)-isotropy type strata given by the points where \(r=\pm1\) corresponds to when the two stationary particles are either antipodal or in the same position. For the Lagrange top these correspond to when the body is statically held  vertically upwards or downwards. 

The \(SO(2)\)-isotropy type strata, which for the 2-body problem corresponds to cocircular arrangements of the two particles, corresponds in the Lagrange-top case to when the motion of the body axis \(g_L\) is longitudinal; that is, when its motion is contained in a plane resulting in a pendulum-like swing of the body. The level sets of \(\widetilde{H}\) on this stratum are given in Figure~\ref{tube}. We can see that the motion is stable when the body is hanging vertically downwards with \(\theta=\pi\). Conversely, when \(\theta=0\) with the body vertically upright, the motion is always unstable. There is therefore, unlike in 3 dimensions, no equivalent notion of the stable `sleeping' motion of the Lagrange top. This result is confirmed later in our stability analysis of the relative equilibria.

Finally we highlight one further remarkable feature of the Lagrange top: that it is completely integrable. Indeed, in \cite{belyaev} it is shown that the \(n\)-dimensional generalisation of the Lagrange top admits a complete set of independent integrals which commute with the left and right \(SO(n-1)\)-symmetry. For the case \(n=4\), this single extra integral is defined on the left reduced space \(\mathfrak{s}^*_+\) by
\begin{equation}
\eta^T\widehat{\Omega}^2\eta+\left(\frac{2\gamma}{\alpha}\right)\overline{g}_L^T\widehat{\Omega}\eta.
\end{equation}
Here we are using the isomorphism in \eqref{so4action} to identify \(\mathfrak{s}^*_+\) with \(\mathfrak{se}(4)^*\) and using the notation for the body angular momentum in \eqref{so4isom}. After a scaling, this may be expressed in terms of the invariant generators as
\begin{equation}
\label{INTEGRAL}
I=\alpha(k_{12}^2-k_{11}k_{22})-2\gamma\delta.
\end{equation}
One may verify directly from the reduced equations in \eqref{fulll_red_eqns} that this is constant for the flow generated by the altered hamiltonian in \eqref{alt_ham}.
\section{Relative equilibria}
\subsection{A classification of the relative equilibria on the left reduced space}
To classify the RE we could of course find the fixed points of the system of full reduced equations in \eqref{fulll_red_eqns}. However, this is easier said than done, and following Proposition~\ref{REpropn} we may equivalently classify the solutions in, say the left reduced space \(M/G_L\) which are the orbits under a one-parameter subgroup of \(G_R\). From the \(G_R\)-action in \eqref{gr_action} we see that such an orbit in \(\mathfrak{g}^*\) is of the form \(q(t)=e^{it\eta}q(0)e^{-it\eta}\) for the generator \(\eta\in\mathfrak{g}_R\). By differentiating this to find the velocity vector, and comparing with the left reduced equations in \eqref{Ham_eqns}, one sees that a solution through \((R_1,R_2,g_L)\) in \(\mathfrak{s}^*_+\) is the orbit of a one-parameter subgroup \(e^{it\eta}\) of \(G_R\) if and only if the following holds
\begin{align}
2(\eta\times R_1)&=+f(r)\overline{g}_L,\label{one}\\
2(\eta\times R_2)&=-f(r)\overline{g}_L,\label{two}\\
2(\eta\times\overline{g}_L)&=-R_1g_L/m_1+g_LR_2/m_2. \label{three}
\end{align}
We suppose without any loss of generality that \(\eta\) is of the form \(|\eta|j\), recalling that \(j\) is an imaginary quaternion in \(\Imag\Q=\mathfrak{g}\). The real part of \(g_L\) is fixed by the action of \(G_R\). We write this real part as \(r=\cos\theta\) for \(\theta\in[0,\pi]\) and classify the solutions of the form above according to the angle \(\theta\). Furthermore, from now on we will suppose that the potential is such that the force \(f(r)\) is never zero.
\subsubsection*{Case 1: $\theta=0,\pi$}
In this case, \(g_L\) is equal to \(\pm1\), and consequently \(\overline{g}_L=0\). It follows from the equations above that \(|\eta|\ge 0\) is arbitrary, and that \(R_1\) and \(R_2\) are scalar multiples of \(j\) which satisfy \(m_2R_1=m_1R_2\).
\subsubsection*{Case 2: $0<\theta<\pi,~\theta\ne\pi/2$}
Equations \eqref{one} and \eqref{two} imply that \(R_1\), \(R_2\) and \(\eta\) are orthogonal to \(\overline{g}_L\). We may also suppose without any loss of generality that \(\overline{g}_L=i\sin\theta\), and thus, that \(g_L=e^{i\theta}\). It then follows that \(R_1\) and \(R_2\) must be of the form
\begin{equation}\label{R1R2form}
R_1=x_1j+yk\quad\text{ and }\quad R_2=x_2j-yk
\end{equation}
for \(x_1\) and \(x_2\) to be determined, and 
\begin{equation}
\label{y}
y=\frac{f\sin\theta}{2|\eta|}.
\end{equation}
It now remains to solve Equation~\eqref{three}, which can be expanded using quaternionic multiplication to give
\begin{align*}
-2|\eta|\sin\theta ~k&=-\frac{1}{m_1}(x_1j+yk)e^{i\theta}+\frac{1}{m_2}e^{i\theta}(x_2j-yk),\\
&=-\frac{1}{m_1}x_1e^{-i\theta}j-\frac{1}{m_1}ye^{-i\theta}k+\frac{1}{m_2}x_2e^{i\theta}j-\frac{1}{m_2}ye^{i\theta}k.
\end{align*}
Multiplying both sides of this equation on the right by \(k\) gives an equation purely in terms of complex numbers. Separating this into real and imaginary parts results in a linear equation in \(x_1\) and \(x_2\) which may be written as
\begin{equation}\label{linear}
-\frac{1}{m_1m_2}\begin{pmatrix}
\sin\theta & \sin\theta \\ \cos\theta & -\cos\theta
\end{pmatrix}
\begin{pmatrix}
x_1 \\ x_2
\end{pmatrix}
=
\begin{pmatrix}
2|\eta|\sin\theta-y\left(\frac{1}{m_1}+\frac{1}{m_2}\right)\cos\theta \\
y\left(\frac{1}{m_1}-\frac{1}{m_2}\right)\sin\theta
\end{pmatrix}.
\end{equation}
This linear system is non-degenerate for \(\theta\ne\pi/2\) and has unique solutions 
\begin{equation}\label{x1x2}
x_1=y\left(\cot2\theta+\frac{m_1}{m_2}\csc2\theta \right)-m_1|\eta|,\quad\text{ and }\quad x_2=y\left(\cot2\theta+\frac{m_2}{m_1}\csc2\theta \right)-m_2|\eta|.
\end{equation}
\subsubsection*{Case 3: $\theta=\pi/2$}
For \(\theta=\pi/2\) the linear system in \eqref{linear} becomes degenerate. Solutions of the system can only exist when \(m_1\) and \(m_2\) are both equal to say \(m\). There is then an entire line's worth of solutions given by
\begin{equation}
\label{line_of_solns}
x_1+x_2=-2m^2|\eta|.
\end{equation}
\subsection{Reconstruction and the full relative equilibria classification}
In accordance with Proposition~\ref{REpropn}, having classified all RE solutions in \(M/G_L\), it remains to reconstruct the corresponding solutions in \(M\). Clearly, the action of \(G_L\) on \(M/G_L\) is trivial, and thus we may safely suppose that the corresponding one-parameter subgroup of \(G_L\) is generated by \(\xi=|\xi|j\in\mathfrak{g}_L\).

From the definitions, the real part of \(g_L\) and \(g_R\) must each be equal to \(r=\cos\theta\), where \(\theta\) is the angular separation between the two particles
\[
\cos\theta=\langle g_1,g_2\rangle=\langle g_1g_2^{-1},1\rangle=\langle 1,g_1^{-1}g_2\rangle=r.
\]
A similar set of equations to \eqref{one}, \eqref{two} and \eqref{three} also hold in the right reduced space, and it follows from these that \(\overline{g}_R\) must also be orthogonal to \(\xi\) and hence \(j\). Therefore, we may suppose that \(\overline{g}_R=-i\sin\theta\), and hence, that \(g_R=e^{-i\theta}\) and \(g_L=e^{i\theta}\). The definitions of \(g_L\) and \(g_R\) give \(g_2=g_1e^{i\theta}=e^{i\theta}g_1\). For when \(\theta\) is not equal to \(0\) or \(\pi\), this implies that \(g_1\) commutes with \(i\). This is only the case for when \(g_1\) belongs to the complex plane \(\C\subset\Q\). This forces \(g_1\) and \(g_2\) to be of the form
\begin{equation}
g_1=e^{-i\phi_1},\quad\text{ and }\quad g_2=e^{i\phi_2}
\end{equation}
where \(\phi_1+\phi_2=\theta\). We may additionally suppose that \(\phi_2\in[0,\pi]\), since \((g_1,g_2)\mapsto(-g_1,-g_2)\) is an element of \(SO(4)\). The orbit of a point \(q\) in \(\Q\) under the one-parameter subgroup of \(G_L\times G_R\) is \(e^{t\xi}qe^{-t\eta}\). For each particle \(g_i\) we can differentiate this motion to obtain the momentum \(p_i=m_i\dot{g}_i\), and then find the right momentum \(R_i=g_i^{-1}p_i\) as below
\begin{align}\label{rightmom}
R_1&=m_1(|\xi|\cos2\phi_1-|\eta|)j+m_1(|\xi|\sin2\phi_1)k,\\
R_2&=m_2(|\xi|\cos2\phi_2-|\eta|)j-m_2(|\xi|\sin2\phi_2)k\nonumber.
\end{align}
These expressions must agree with those in \eqref{R1R2form} for the forms of \(R_1\) and \(R_2\), and therefore the following must hold
\[
m_1\sin2\phi_1=m_2\sin2\phi_2=\frac{y}{|\xi|}.
\]
For when \(\theta\ne \pi/2\), this equation uniquely determines \(\phi_1\) and \(\phi_2\). For when \(\theta=\pi/2\), and therefore \(m_1=m_2\), the angles are not unique: for when \(y\) is positive the solutions are for all \(\phi_1\in(0,\pi/2)\), and all \(\phi_1\in(-\pi/2,0)\) for \(y\) negative. Finally, for the exceptional cases, we have \(g_1=g_2\) for \(\theta=0\), and \(g_1=-g_2\) for \(\theta=\pi\). As \(\theta\) is constant, it follows from consideration of Case 1 above, that these motions correspond to the two particles moving together around a great circle arbitrarily quickly, either occupying the same position, or antipodal to each other. 

\begin{thm}\label{main}
	For the 2-body problem on the 3-sphere with an either strictly attractive or repulsive potential, all relative equilibria solutions are completely classified, up to conjugacy, according to the angle \(\theta\in[0,\pi]\) subtended by both particles in the sense listed below. In each of these cases we suppose, without loss of generality, that the corresponding one-parameter subgroup of \(G_L\times G_R\) is generated by \((|\xi|j,|\eta|j)\in\mathfrak{g}_L\times\mathfrak{g}_R\).
	\begin{enumerate}
		\item[\textendash] For \(\theta=0\) and \(\pi\), we will call these solutions \emph{singular}. For \(\theta=0\) we may take the initial positions to be \(g_1=g_2=1\), and \(g_1=-g_2=1\) for \(\theta=\pi\). Both \(|\xi|\) and \(|\eta|\) are arbitrary. 
		\item[\textendash] For \(\theta<\pi/2\) we will call the solutions \emph{acute}, and \emph{obtuse} when \(\pi/2<\theta<\pi\). In both cases we may take the initial positions of the particles to be at \(g_1=e^{-i\phi_1}\) and \(g_2=e^{i\phi_2}\) where the angles \(\phi_1\) and \(\phi_2\) are determined by \(\phi_1+\phi_2=\theta\) and by
		\begin{equation}\label{little_lever}
		m_1\sin2\phi_1=m_2\sin2\phi_2.
		\end{equation}
		\item[\textendash] Solutions for \(\theta=\pi/2\) will be called \emph{right-angled} and only exist for \(m_1=m_2\). In this case, \(\phi_1\) and \(\phi_2\) are not uniquely determined by \eqref{little_lever} and may be any angles satisfying \(\phi_1+\phi_2=\pi/2\) where \(\phi_1\in(0,\pi/2)\) for a strictly attractive potential (\(f>0\)), or \(\phi_1\in(-\pi/2,0)\) for a strictly repulsive potential (\(f<0\)). In the case of equal masses, we will call the RE for any \(\theta\) \emph{isosceles} if \(\phi_1=\phi_2\).
	\end{enumerate}
	For all of the non-singular cases, the angular velocities satisfy
	\begin{equation}\label{lever}
	2|\xi||\eta|=f(\theta)\sin\theta/\zeta
	\end{equation}
	where \(\zeta=m_1\sin2\phi_1=m_2\sin2\phi_2\), and where \(f(\theta)=-dV/dr\) for \(V=V(1,e^{i\theta})\) and \(r=\cos\theta\).
\end{thm}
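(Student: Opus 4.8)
The statement is essentially a repackaging of the case analysis of the two preceding subsections, so the plan is to assemble it rather than prove anything genuinely new. First I would invoke Proposition~\ref{REpropn} to replace the problem of classifying relative equilibria in \(M\) with that of classifying, up to conjugacy, the RE in the left reduced space \(M/G_L\); these are the orbits of one-parameter subgroups of \(G_R\), and a point \((R_1,R_2,g_L)\in\mathfrak{s}^*_+\) lies on such an orbit precisely when equations \eqref{one}--\eqref{three} hold. Using the transitivity of the adjoint action on spheres in \(\Imag\Q\) (as noted in the introduction), the generator may be normalised to \(\eta=|\eta|j\), and the real part \(r=\cos\theta\) of \(g_L\) is then the only remaining conjugacy invariant. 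Splitting on \(\theta\) into the three ranges \(\theta\in\{0,\pi\}\), \(0<\theta<\pi\) with \(\theta\ne\pi/2\), and \(\theta=\pi/2\) — exactly Cases 1, 2, 3 — already produces the \emph{singular}, \emph{acute}/\emph{obtuse}, and \emph{right-angled} families, the forced equality \(m_1=m_2\) in Case 3, and the formula \eqref{y} for \(y=f\sin\theta/(2|\eta|)\), together with the explicit momenta \eqref{R1R2form} and \eqref{x1x2}.

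Next I would carry out the reconstruction to lift these solutions from \(M/G_L\) back to \(M\). The key geometric input is that, running the analysis analogous to \eqref{one}--\eqref{three} on the right reduced space, \(\overline{g}_R\) is orthogonal to \(j\) just as \(\overline{g}_L\) is, so \(g_L=e^{i\theta}\) and \(g_R=e^{-i\theta}\); combined with \(g_2=g_1 e^{i\theta}=e^{i\theta}g_1\) this forces \(g_1\) to commute with \(i\), hence \(g_1\in\C\), whenever \(\theta\ne 0,\pi\). This gives \(g_1=e^{-i\phi_1}\), \(g_2=e^{i\phi_2}\) with \(\phi_1+\phi_2=\theta\), where \(\phi_2\in[0,\pi]\) may be assumed after applying the symmetry \((g_1,g_2)\mapsto(-g_1,-g_2)\in SO(4)\). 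Differentiating the reconstructed motion \(q\mapsto e^{t\xi}q e^{-t\eta}\) yields the right momenta in the form \eqref{rightmom}; matching the \(k\)-components of \eqref{rightmom} against the form \eqref{R1R2form} dictated by Case 2 produces \(m_1\sin 2\phi_1=m_2\sin 2\phi_2=y/|\xi|\), which is \eqref{little_lever} and also pins down \(\phi_1,\phi_2\) uniquely in the non-degenerate range and leaves the stated one-parameter family on the line \(\theta=\pi/2\) (with the sign of \(\phi_1\) fixed by the sign of \(f\) through \eqref{y}). The lever identity \eqref{lever} then drops out by eliminating \(y\): from \(\zeta=m_1\sin 2\phi_1=y/|\xi|\) we get \(|\xi|\zeta=y=f(\theta)\sin\theta/(2|\eta|)\), i.e. \(2|\xi||\eta|=f(\theta)\sin\theta/\zeta\). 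The singular cases \(\theta=0,\pi\) are read straight off Case 1 (\(\overline{g}_L=0\), \(|\xi|,|\eta|\) arbitrary, \(g_1=\pm g_2\)).

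The remaining work is bookkeeping: checking that each ``without loss of generality'' normalisation — \(\eta=|\eta|j\), \(\overline{g}_L=i\sin\theta\), \(\xi=|\xi|j\), \(\phi_2\in[0,\pi]\) — is a genuine conjugation by an element of \(G_L\times G_R\) (equivalently \(SO(4)\)) or by the discrete symmetry above, so that the list is both complete and non-redundant up to conjugacy. I expect the only point requiring an actual computation to be the consistency check that the \(j\)-components of the reconstructed momenta \eqref{rightmom} agree with the Case 2 solutions \eqref{x1x2}, i.e. that the reconstruction is compatible with \emph{all} components of \(R_1,R_2\) and not merely the one used to extract \eqref{little_lever}; this is routine trigonometry with the double-angle identities but is the step where something could in principle fail to close up. Everything else is a matter of collating results already in hand.
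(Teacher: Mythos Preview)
Your proposal is correct and follows the paper's own argument essentially step for step: the paper likewise invokes Proposition~\ref{REpropn}, runs the Case 1/2/3 analysis on \(\mathfrak{s}^*_+\) to get \eqref{R1R2form}, \eqref{y}, \eqref{x1x2}, \eqref{line_of_solns}, and then reconstructs by forcing \(g_L=e^{i\theta}\), \(g_R=e^{-i\theta}\), deducing \(g_1,g_2\in\C\), and matching the \(k\)-components of \eqref{rightmom} against \eqref{R1R2form} to obtain \eqref{little_lever} and hence \eqref{lever}. The one place you go slightly beyond the paper is in flagging the \(j\)-component consistency check between \eqref{rightmom} and \eqref{x1x2}; the paper does not perform this explicitly, and it is indeed automatic by uniqueness of the Case~2 solution for \(\theta\neq\pi/2\) (and a short direct computation at \(\theta=\pi/2\)).
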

To understand these rigid motions, one can show that a one-parameter subgroup generated by \((|\xi|j,|\eta|j)\) in \(\mathfrak{g}_L\times\mathfrak{g}_R\) acts by rotating the oriented planes \(\Span\{1,j\}\) and \(\Span\{k,i\}\) through the angle \(|\xi|-|\eta|\) and \(|\xi|+|\eta|\) with each unit of time respectively. For when \(|\xi|=|\eta|\) this gives a simple rotation, and the two particles carry out a cospherical motion contained to the 2-sphere in \(\Span\{1,i,k\}\) rotated about the real line with angular velocity \(\omega=2|\xi|=2|\eta|\). In this case, the theorem above coincides with the RE classification given in \cite{james} for the situation on the 2-sphere.

We conclude this classification by remarking that, if we use the expressions in \eqref{x1x2} and \eqref{line_of_solns} for \(x_1\) and \(x_2\), one can express (although this is a fairly unremitting calculation) the explicit values of the generators of the invariant ring in \(\mathfrak{s}^*_+/G_R\) at the given RE. It is then possible to verify that the full reduced equations in \eqref{fulll_red_eqns} do indeed yield a fixed point for these values, as expected.
\subsection{Linearisation and the energy-momentum map}
Although the RE were classified by first finding the solutions in the left reduced space, the stability results will be derived by directly working in the full reduced space. As a RE is precisely a fixed point of the full reduced equations of motion, we may linearise the system in  \eqref{fulll_red_eqns} at such a point. Noting that \(R_1\) and \(R_2\) are orthogonal with \(\overline{g}_L\) at a RE, and thus, that \(k_{13}=k_{23}=0\), this linear system may be written as the following \(8\times 8\) matrix in \(\R^8\) with coordinates ordered as \((k_{11},k_{12},k_{13},k_{22},k_{23},k_{33},r,\delta)\),
\begin{equation}\label{8x8}
\begin{pmatrix}
0 & 0 & 2f & 0 & 0 & 0 & 0 & 0 \\
0 & 0 & -f & 0 & f & 0 & 0 & 0 \\
-r/m_1 & r/m_2 & 2f & 0 & 0 & f & A_1 & -1/m_2 \\
0 & 0 & 0 & 0 & -2f & 0 & 0 & 0 \\
0 & -r/m_1 & 0 & r/m_2 & 0 & -f & A_2 & 1/m_1 \\
0 & 0 & -2r/m_1 & 0 & 2r/m_2 & 0 & 0 & 0 \\
0 & 0 & 1/m_1 & 0 & -1/m_2 & 0 & 0 & 0 \\
0 & 0 & A_3 & 0 & A_4 & 0 & 0 & 0 
\end{pmatrix}.
\end{equation}
We have used the abbreviations
\begin{align*}
A_1&=+\frac{df}{dr}k_{33}-\left(\frac{k_{11}}{m_1}-\frac{k_{12}}{m_2}\right),\\
A_2&=-\frac{df}{dr}k_{33}-\left(\frac{k_{12}}{m_1}-\frac{k_{22}}{m_2}\right),\\
A_3&=+\frac{k_{12}}{m_1}+\frac{k_{22}}{m_2},\\
A_4&=-\frac{k_{11}}{m_1}-\frac{k_{12}}{m_2}.
\end{align*}
As one might expect, the characteristic polynomial of such a large symbolic matrix is fairly horrendous. For this reason, we are forced to make a choice for the force \(f\). The two potentials we will consider are the \emph{gravitational potential}
\begin{equation}\label{2body_pot}
V=-m_1m_2\cot\theta\quad\text{ with }\quad f=m_1m_2\csc^3\theta
\end{equation}
for the planetary 2-body problem (see \cite{diacu} for a justification of this choice), and 
\begin{equation}\label{lagrange_pot}
V=\gamma\cos\theta\quad\text{ with }\quad f=-\gamma
\end{equation}
for the potential of the 4-dimensional Lagrange top as in \eqref{Lagrange_ham}. With these choices of \(f\) computing the characteristic polynomial becomes feasible, although we have enjoyed the aid of software which can handle such symbolic calculations. Before presenting these polynomials, we remark that we should expect at least four of the eigenvalues to be zero. This is because the generic symplectic leaves are 4-dimensional, and the fixed points of the flow should vary continuously as we move through the leaves.

For the 2-body potential the characteristic polynomial in the variable \(t\) is given by
\begin{equation}\label{2bodycharpoly}
t^4(c_0+c_2t^2+t^4)
\end{equation}
where
\[
c_2=2 \left(\frac{k_{11}}{m_1^2}+\frac{k_{22}}{m_2^2}+\left(m_1+m_2\right) \cot\theta\csc ^2\theta\right)
\]
and
\[
c_0=\left(\frac{k_{11}}{m_1^2}-\frac{k_{22}}{m_2^2}\right)^2+2\cot\theta\csc^2\theta\left[\frac{k_{11}}{m_1}\left(1+\frac{m_2}{m_1}\right)+\frac{k_{22}}{m_2}\left(1+\frac{m_1}{m_2}\right)\right]+\left[(m_1+m_2)\cot\theta\csc^2\theta\right]^2.
\]

Before proceeding to give the characteristic polynomial for the Lagrange top, we note that it follows from \eqref{x1x2} that \(|R_1|=|R_2|\) for when \(\theta\ne\pi/2\). We will therefore first give the polynomial for when \(\theta\ne\pi/2\) and where we set \(k_{11}=k_{22}=|R|^2\). This characteristic polynomial is then 
\begin{equation}\label{lagrangepoly}
t^4 \left(t^2-2 \alpha \gamma \cos (\theta )\right) \left(t^2+4 \alpha^2 |R|^2-8 \alpha \gamma \cos (\theta )\right).
\end{equation}
For when \(\theta=\pi/2\) it is no longer the case that \(k_{11}=k_{22}\). The resulting polynomial is then
\begin{equation}\label{rightanglepoly}
t^4\left(t^4+2\alpha^2(k_{11}+k_{22})t^2+\alpha^4(k_{11}-k_{22})^2\right).
\end{equation}

In the next two subsections we analyse the roots of these characteristic polynomials accompanied by images of the energy-momentum map. The RE are equivalently defined to be critical points of this map, and the set of critical values helps illuminate the study of the RE by understanding how they bifurcate and change in nature.

We have opted to omit the full details of how the images of the energy-momentum (actually the energy-Casimir) map are obtained. Ultimately this task is pure calculation, but it does deserve some comments on how to obtain it in practice. Following Theorem~\ref{main} one can parameterise the families of RE by \(\theta\) and \(|\eta|\) alone, or \(\phi_1\) and \(|\eta|\) for the right-angled RE. One can then acquire explicit formulae for \(H\), \(|\lambda|^2\) and \(|\rho|^2\) in terms of these. This statement conceals an implicit exercise in elementary geometry to express \(\zeta\) and relate the angles \(\phi_1\), \(\phi_2\) to \(\theta\) in \eqref{lever}. The energy-Casimir map simplifies considerably upon introducing the reparameterisation
\begin{equation}\label{tau}
2e^\tau|\eta|^2=f\sin\theta/\zeta.
\end{equation}
For our purposes it will be enough to state that the image of each family of RE under the energy-Casimir map can be parameterised by \(\theta\) and \(\tau\) alone, or \(\phi_1\) and \(\tau\) for the right-angled RE.

\subsection{Stability of relative equilibria for the 2-body problem}
\begin{figure}
	\centering
	\begin{subfigure}[b]{0.4\textwidth}
		\includegraphics[width=\textwidth]{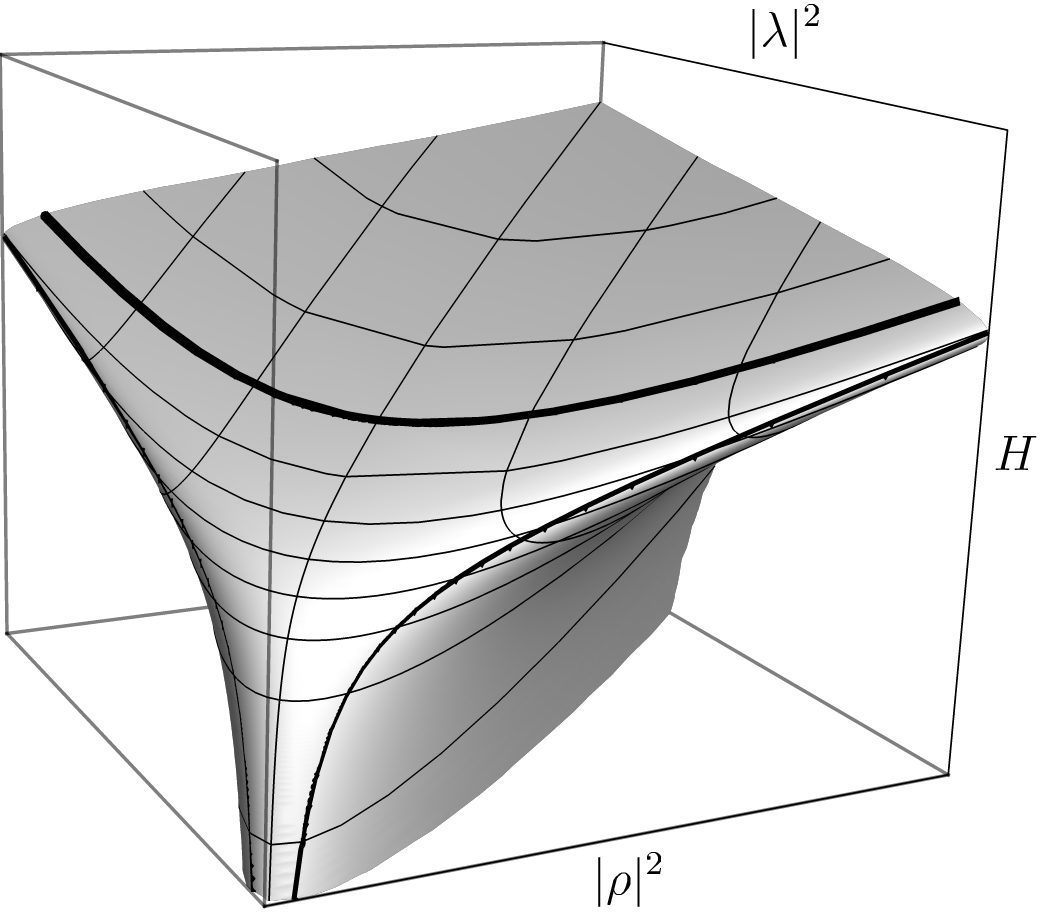}
		\caption{$0<\theta<\pi$, isosceles}
		
	\end{subfigure}
	\qquad\quad
	\begin{subfigure}[b]{0.4\textwidth}
		\includegraphics[width=\textwidth]{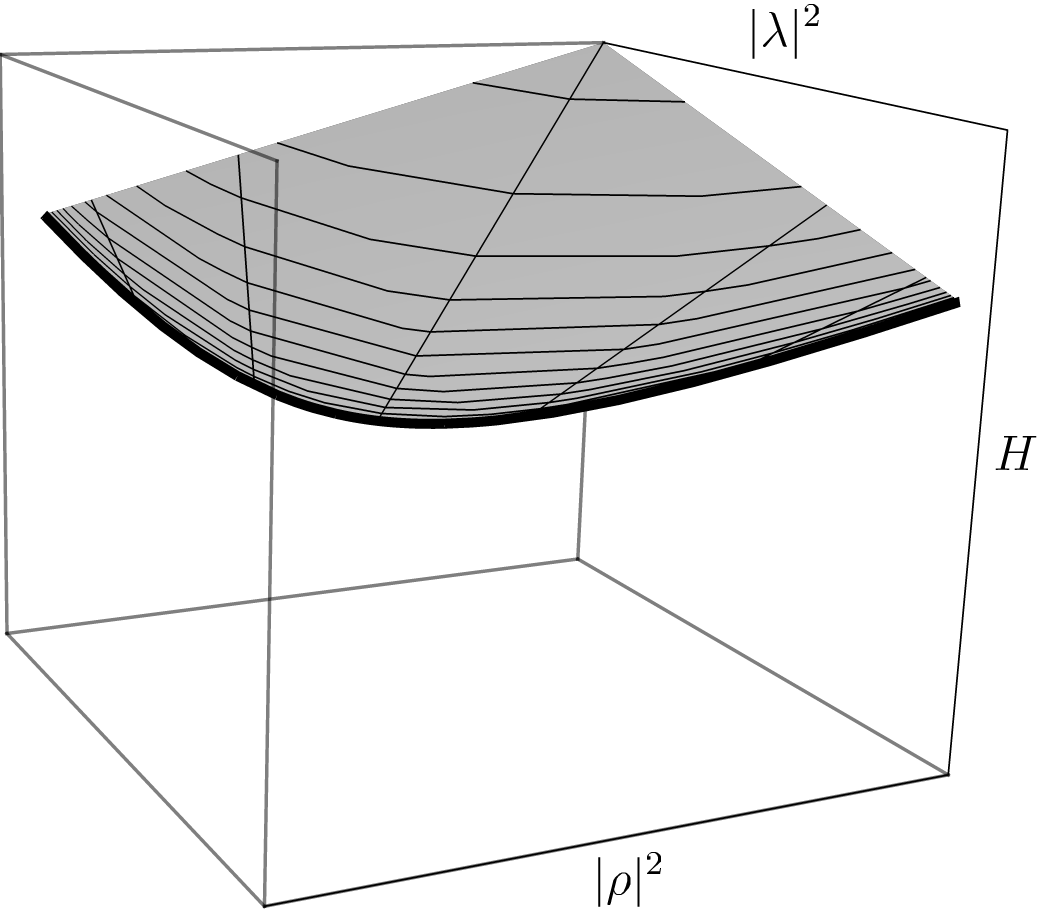}
		\caption{$\theta=\pi/2$, right-angled RE}
		
	\end{subfigure}
	\caption{\label{equal}The energy-Casimir bifurcation diagram for the case of equal masses. The axes for both diagrams are to the same scale. For the isosceles component, the coordinate lines running from left to right are of constant \(\theta\), and those transversal to them are for constant \(\tau\). The line \(\theta=\pi/2\) is thickened, and it is along this line that the component for the right-angled RE is attached.}
\end{figure}
Surprisingly, despite the complicated appearance of the coefficients in \eqref{2bodycharpoly}, all four roots may be solved and compactly written as 
\begin{align}
z_{1,2}&=\pm\sqrt{-\left(\frac{\sqrt{k_{11}}}{m_1}+\frac{\sqrt{k_{22}}}{m_2}\right)^2-(m_1+m_2)\cot\theta\csc^2\theta},\\
w_{1,2}&=\pm\sqrt{-\left(\frac{\sqrt{k_{11}}}{m_1}-\frac{\sqrt{k_{22}}}{m_2}\right)^2-(m_1+m_2)\cot\theta\csc^2\theta}.
\end{align}
It is clear that these eigenvalues are all purely imaginary when \(\theta\) is acute. Furthermore, by writing \(k_{11}\) and \(k_{22}\) in terms of \(\theta\) and \(|\eta|\) using the forms in \eqref{R1R2form} and \eqref{x1x2}, one can show that
\[
\frac{k_{11}}{m_1^2}+\frac{k_{22}}{m_2^2}+(m_1+m_2)\cot\theta\csc\theta=\frac{1}{8|\eta|^2}\left(16|\eta|^4\cos^2\theta\sin^6\theta+(m_1^2+m_2^2+2m_1m_2\cos2\theta)\right).
\]
This expression is always greater than zero, and so the eigenvalue pair \(z_{1,2}\) is always purely imaginary and non-zero. On the other hand, the eigenvalue pair \(w_{1,2}\) does undergo a transition from imaginary to real. For when the masses are equal, it follows from \eqref{x1x2} that \(k_{11}=k_{22}\) for the isosceles RE, and therefore, that \(w_{1,2}\) is a non-zero real pair for \(\theta\) obtuse, and zero for \(\theta=\pi/2\). For the remaining right-angled RE which are not isosceles, as \(k_{11}\ne k_{22}\) the \(w_{1,2}\) roots are a non-zero imaginary pair. 

For non-equal masses it becomes more difficult to describe the transition in reality of the \(w_{1,2}\) pair. Unlike the case for motion on the 2-sphere in \cite{james}, this transitions is not determined solely by a critical angle. With reference to the energy-Casimir diagram in Figure~\ref{obtuse}, we argue that this transition occurs along the \emph{fold} in the obtuse component. A rigorous proof of this requires extremely lengthy calculations and so we will merely sketch it here. The image of the energy-Casimir map is given as a surface parameterised by \(\theta\) and \(\tau\). As this surface is folded, the curves of constant \(|\lambda|^2\) and \(|\rho|^2\) in \((\theta,\tau)\)-space generically intersect in two points. Along the fold where such a pair of \(|\lambda|^2\) and \(|\rho|^2\) occurs only once, these two curves must intersect tangentially at a point. The fold may then be characterised by the condition that the Jacobian of \(|\lambda|^2\) and \(|\rho|^2\) with respect to \(\theta\) and \(\tau\) vanishes. This condition turns out to give a quadratic in \(\cosh\tau\). As the pair \(z_{1,2}\) is always imaginary, the reality of \(w_{1,2}\) is determined by the sign of \(c_0\) in \eqref{2bodycharpoly}: the pair is imaginary when \(c_0\ge0\), and real for \(c\le 0\). The expression for \(c_0\) may be written in terms of \(\theta\) and \(\tau\) using \eqref{x1x2} and the reparameterisation in \eqref{tau}. Setting \(c_0\) to zero then gives an expression for \(\cosh\tau\) in terms of \(\theta\), and upon substituting this into the Jacobian, yields zero as desired. It is the portion of the surface above the fold for which \(w_{1,2}\) is imaginary, and after crossing the fold, the portion below for which \(w_{1,2}\) is real.

Now that we have a picture for the eigenvalues of the linearisation at the RE, we have sufficient conditions for instability but not stability. A helpful result in this direction would be the signature of the Hessian for the hamiltonian at the RE restricted to a leaf. A definite Hessian implies Lyapunov stability, however we should be pessimistic about this prospect since the Hessian obtained in \cite{james} for the linearly stable RE on the 2-sphere have mixed signature. In this situation, stability is again ambiguous. We encourage the reader to observe that the energy-momentum diagrams given in \cite{james} coincide with our figures for when \(|\lambda|=|\rho|\). This is not a coincidence, but instead a consequence of Sjamaar's Principle \cite{sjamaar} which, roughly speaking, establishes a symplectomorphism between the reduced spaces with the reduced spaces of isotropy submanifolds equipped with the isotropy action. We can be more explicit about the implementation of this principle for our example.
\begin{propn}
	Let the 2-sphere be given by the unit imaginary quaternions \(S^2\subset\Imag\Q\) and consider the action of \(G\) on \(N=T^*S^2\times T^*S^2\) with momentum map \(J\). For any \(\Omega\in\mathfrak{g}^*\) the reduced space \(N_\Omega\) is symplectomorphic to \(M_{\Omega,-\Omega}\).
\end{propn}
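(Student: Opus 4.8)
The plan is to realise $N$ as the fixed‑point set of a symplectic involution of $M$ and then apply Sjamaar's Principle. Let $\tau\colon\Q\to\Q$ be the reflection $\tau(q)=-q^{\dagger}$ in the hyperplane $\Imag\Q$; its fixed points in $G$ are exactly the unit imaginary quaternions, i.e.\ $S^{2}=G\cap\Imag\Q$, and since $|g_{2}-g_{1}|$ is invariant under every orthogonal transformation, $\tau$ is a symmetry of the $2$‑body problem. Its cotangent lift to $M$, which I again write $\tau$, is a symplectic involution, and after the usual identification of cotangent vectors with tangent vectors through the inner product, its fixed‑point set is $\mathrm{Fix}(\tau)=\{(g_{1},p_{1},g_{2},p_{2})\in M: g_{i},p_{i}\in\Imag\Q\}$, which is precisely $N=T^{*}S^{2}\times T^{*}S^{2}$ sitting inside $M$ as the cospherical locus in standard position. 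Because the canonical one‑form on $T^{*}G$ restricts to that on $T^{*}S^{2}$, the inclusion $\iota\colon N\hookrightarrow M$ is a symplectic embedding.

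I would then record how $\tau$ meets the symmetry and the momenta. A short computation with \eqref{so4action} and $l^{\dagger}=l^{-1}$ for unit quaternions gives $\tau\circ\Phi(l,r)=\Phi(r,l)\circ\tau$, so $\tau$ normalises the $G_{L}\times G_{R}$‑action and interchanges the two factors; the subgroup it fixes is the diagonal $G=\{(l,l)\}$, which acts on $\mathrm{Fix}(\tau)$ by $q\mapsto\Ad_{l}q$, that is, exactly by the rotation action of $G$ on $N$ in the statement. On $\mathrm{Fix}(\tau)$ one has $g_{i}^{-1}=-g_{i}$ and $\langle p_{i},g_{i}\rangle=0$, whence $L_{i}=g_{i}\times p_{i}=-R_{i}$ and therefore $\lambda=-\rho$; moreover $\lambda|_{\mathrm{Fix}(\tau)}$ is, up to the conventions in force, the $SO(3)$‑momentum map $J$ of $N$. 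Equivalently $J_{L}\circ\tau=-J_{R}$ and $J_{R}\circ\tau=-J_{L}$, so $(\Omega,-\Omega)$ is a fixed point of the involution induced by $\tau$ on $\mathfrak{g}_{L}^{*}\times\mathfrak{g}_{R}^{*}$, and $\iota$ sends $J^{-1}(\Omega)$ into $J_{L,R}^{-1}(\Omega,-\Omega)$.

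With these identifications, $[z]\mapsto[\iota(z)]$ descends to a map $N_{\Omega}\to M_{\Omega,-\Omega}$, and Sjamaar's Principle \cite{sjamaar} says this is a symplectomorphism: the reduction of the isotropy submanifold $\mathrm{Fix}(\tau)$ by the residual symmetry at $\Omega$ reproduces, stratum by stratum, the reduction of $M$ by $G_{L}\times G_{R}$ at the $\tau$‑invariant value $(\Omega,-\Omega)$, with matching reduced symplectic forms (automatic here in any case since $\iota$ is a symplectic equivariant embedding). The concrete thing to verify is that the induced map is a homeomorphism. Surjectivity is the statement that every $(G_{L}\times G_{R})$‑orbit in $J_{L,R}^{-1}(\mathcal{O}_{\Omega}\times\mathcal{O}_{-\Omega})$ meets $\iota(N)$; since by Proposition~\ref{crit} every such point is cospherical and can be rotated so that its common three‑space is $\Imag\Q$, and since orbit reduction depends only on the coadjoint orbits $\mathcal{O}_{\Omega},\mathcal{O}_{-\Omega}$ (which coincide, both being the sphere of radius $|\Omega|$), this reduces to an elementary fact about three‑planes in $\R^{4}$. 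Injectivity is the corresponding orbit‑matching: two points of $\mathrm{Fix}(\tau)\cap J^{-1}(\Omega)$ lie in one $(G_{L}\times G_{R})$‑orbit precisely when they lie in one orbit of the residual symmetry on $N$.

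The delicate point, and where I expect the genuine work to lie, is the discrete part of that residual symmetry. The full subgroup of $G_{L}\times G_{R}$ preserving $\mathrm{Fix}(\tau)$ is strictly larger than the diagonal $G$: it also contains $\Phi(1,-1)$, the map $q\mapsto-q$, which acts on $N$ as the simultaneous antipodal map and whose fixed set meets $N_{\Omega}$ in exactly the cocircular configurations. On the $M$‑side this element already lies inside the stabiliser $G_{\Omega}\times G_{-\Omega}$ of $(\Omega,-\Omega)$, so for the orbit‑matching to be a genuine bijection — and not merely finite‑to‑one — this extra symmetry must be folded into the meaning of $N_{\Omega}$ (equivalently, the residual symmetry acting on $N$ is the full orthogonal group on $S^{2}\subset\Imag\Q$, generated by the rotations $q\mapsto\Ad_{l}q$ together with the antipodal map, rather than just $SO(3)$). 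Keeping careful track of this, together with the surjectivity just described and the elementary identities of the second paragraph, completes the argument, with Sjamaar's Principle supplying the identification of the symplectic structures.
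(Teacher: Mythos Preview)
Your approach mirrors the paper's: embed $N$ symplectically in $M$, compute that $\lambda=-\rho=J$ on the image so that $J^{-1}(\Omega)\subset J_{L,R}^{-1}(\Omega,-\Omega)$, use Proposition~\ref{crit} to see that every orbit with $|\lambda|=|\rho|$ meets $N$, and conclude via Sjamaar's principle that the induced Poisson bijection of leaves is a symplectomorphism. The paper does this in three sentences without your involution $\tau$; framing $N$ as $\mathrm{Fix}(\tau)$ is a clean way to organise the identities $J_L\circ\tau=-J_R$ and to recognise the diagonal $G$ as the subgroup fixed by the swap of factors, but it is not a genuinely different method.

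Where you go beyond the paper is your final paragraph, and you are right to flag it. The setwise stabiliser of $N$ in $G_L\times G_R$ is $\{(l,\pm l)\}$, strictly larger than the diagonal; the extra component acts on $N$ as the antipodal map $x\mapsto -x$, which for a configuration spanning all of $\Imag\Q$ lies in no $SO(3)$-orbit of $x$ (there being no $\Ad_l=-I$ on $\R^3$). The paper's injectivity step (``the isotropy subgroup fixing a 2-sphere in $\Q$ is trivial'') addresses only the \emph{pointwise} stabiliser and so does not actually settle this: taken literally, the induced map $N/G\to M/(G_L\times G_R)$ is generically two-to-one, branched over the cocircular locus where a $\pi$-rotation about the normal realises $-I$ on the common plane. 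Your proposed remedy---absorbing the antipodal involution into the residual symmetry, i.e.\ reducing $N$ by $O(3)$ rather than $SO(3)$---is the natural one. For the paper's actual application (transporting Hessian signatures from \cite{james} across what is in any case a local symplectomorphism near each RE) the distinction is immaterial, which is presumably why the paper does not dwell on it.
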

\begin{proof}
	For \((g_1,p_1,g_2,p_2)\) cospherical in \(N\) the value of \(J\) is the angular momentum
	\[
	\Omega=(g_1\times p_1)+(g_2\times p_2).
	\]
	As these quaternions are imaginary, we have \(g_i^{-1}=-g_i\). By expanding the cross product as multiplication between quaternions and taking the complex conjugate of the \(R_i\), one can show that \(2\Omega=\lambda-\rho\) and \(\lambda=-\rho\). As the isotropy subgroup fixing a 2-sphere in \(\Q\) is trivial, the symplectic submanifold \(N\hookrightarrow M\) descends to the orbit spaces to give an inclusion \(N/G\hookrightarrow M/(G_L\times G_R)\). From Proposition~\ref{crit}, any point with \(|\lambda|=|\rho|\) is cospherical, and thus, at the level of orbit spaces there is a bijection between the orbit reduced spaces \(J^{-1}(\orb_\Omega)/G\rightarrow J_{L,R}^{-1}(\orb_\Omega\times\orb_{-\Omega})/G_L\times G_R\) induced by the canonical inclusion. As all of the maps here are Poisson, it follows that this bijection between symplectic leaves is a symplectomorphism.
\end{proof}
One sees from this proof that the reduced hamiltonian on \(M_{\Omega,-\Omega}\) pulls back to give the hamiltonian on \(N_\Omega\). It follows that the Hessian at the RE must be the same, and therefore, that the Hessian of the RE in the energy-Casimir diagrams in Figures~\ref{equal} and \ref{obtuse} agrees with that in \cite{james} for when \(|\lambda|=|\rho|\). As the Hessian varies continuously with the relative equilibria, and that the signature can only change when it crosses a zero eigenvalue, we may apply a continuity argument to extend the Hessian over all of the RE with non-zero eigenvalues. This observation, combined with the above discussion concerning the linearisation provides the proof to the following theorem.
\begin{thm}
	For the 2-body problem on the 3-sphere with the gravitational potential, we have the following stability results for the RE:
	\begin{enumerate}
			\item[\textendash] All acute RE are linearly stable, with the signature of the Hessian of the hamiltonian being \((+,+,-,-)\).
			\item[\textendash] For when the masses are equal, all right-angled RE which are not isosceles are linearly stable with signature \((+,+,-,-)\).
			\item[\textendash] Obtuse RE which are above the fold in Figure~\ref{obtuse} are linearly stable with signature \((+,+,-,-)\). Obtuse RE under the fold are linearly unstable with signature \((+,+,+,-)\). For when the masses are equal there is no such fold, and all obtuse RE are linearly unstable with signature \((+,+,+,-)\).
	\end{enumerate}
\end{thm}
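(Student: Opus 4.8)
The plan is to read off linear stability from the eigenvalues of the linearisation \eqref{8x8} and to obtain the Hessian signature via Sjamaar's Principle, anchoring both against the known results for the $2$-sphere in \cite{james}. A RE is a fixed point of the fully reduced system \eqref{fulll_red_eqns}, so its linear stability on a symplectic leaf is governed by \eqref{8x8}; four of the eigenvalues must vanish because the leaves are $4$-dimensional and the fixed point persists across nearby leaves, and the characteristic polynomial \eqref{2bodycharpoly} leaves the two pairs $z_{1,2}$ and $w_{1,2}$, with linear stability holding exactly when both pairs are purely imaginary.

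The first step is to dispose of the $z$-pair. Substituting the explicit forms \eqref{R1R2form} and \eqref{x1x2} for $k_{11},k_{22}$ into the radicand of $z_{1,2}$ and simplifying yields the manifestly positive quantity $\tfrac{1}{8|\eta|^2}\bigl(16|\eta|^4\cos^2\theta\sin^6\theta+m_1^2+m_2^2+2m_1m_2\cos 2\theta\bigr)$, so $z_{1,2}$ is always a non-zero imaginary pair and never obstructs stability. The reality of $w_{1,2}$ is controlled by the sign of $c_0$, equivalently by that of the radicand of $w_{1,2}$. For acute $\theta$ both terms of that radicand are non-positive, so $w_{1,2}$ is imaginary and acute RE are linearly stable for any masses. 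For equal masses the isosceles RE have $k_{11}=k_{22}$, whence the radicand reduces to $-(m_1+m_2)\cot\theta\csc^2\theta$, making $w_{1,2}$ a non-zero real pair for obtuse $\theta$ and zero at $\theta=\pi/2$; the non-isosceles right-angled RE have $k_{11}\ne k_{22}$ with $\cot\theta=0$, so $w_{1,2}$ remains a non-zero imaginary pair. For unequal masses there is no single critical angle, so I would identify the sign change of $c_0$ along the obtuse family with the fold of the energy--Casimir image in Figure~\ref{obtuse}: parameterise that family by $(\theta,\tau)$ through \eqref{tau}, characterise the fold as the locus where the Jacobian of $(|\lambda|^2,|\rho|^2)$ in $(\theta,\tau)$ vanishes, solve $c_0=0$ for $\cosh\tau$ in terms of $\theta$ using \eqref{x1x2} and \eqref{tau}, and verify by substitution that this makes the Jacobian vanish. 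Above the fold $c_0>0$ and $w_{1,2}$ is imaginary; below it $c_0<0$ and $w_{1,2}$ is a non-zero real pair, giving linear instability.

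To obtain the Hessian signature I would invoke the proposition identifying $M_{\Omega,-\Omega}$ with the reduced space $N_\Omega$ of the $2$-body problem on $S^2\subset\Imag\Q$, under which the fully reduced Hamiltonian pulls back to the reduced Hamiltonian on $N_\Omega$; hence at a RE lying on the cospherical locus $|\lambda|=|\rho|$ the Hessian is exactly the one computed in \cite{james}, namely $(+,+,-,-)$ at a linearly stable RE and $(+,+,+,-)$ at a linearly unstable one. Since the Hessian varies continuously along each connected family of RE, and its signature can only change where an eigenvalue of \eqref{8x8} passes through zero --- which by the previous step occurs precisely on the right-angled isosceles locus and along the fold --- a continuity argument propagates these signatures away from the cospherical locus: acute RE and obtuse RE above the fold keep signature $(+,+,-,-)$, while crossing the fold flips exactly one sign in step with the $w$-pair becoming real, giving $(+,+,+,-)$.

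The step I expect to be the main obstacle is the fold computation for unequal masses: showing algebraically that $\{c_0=0\}$ is the vanishing locus of the Jacobian of $(|\lambda|^2,|\rho|^2)$ is a lengthy symbolic manipulation, and it must be set up by first carrying out the elementary-geometry step of expressing $\zeta$ and relating $\phi_1,\phi_2$ to $\theta$ as in \eqref{lever}. A secondary subtlety is making the continuity argument watertight: one must check that every connected component of the set of RE with non-vanishing eigenvalues meets the slice $|\lambda|=|\rho|$, so that its signature is pinned down by \cite{james}; this relies on the global structure of the RE families from Theorem~\ref{main}, and in particular on the acute, right-angled and obtuse strata each being connected and abutting the cospherical locus.
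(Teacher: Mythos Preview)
Your proposal is correct and follows essentially the same route as the paper: linear stability is read off from the factorisation of \eqref{2bodycharpoly} into the $z$- and $w$-pairs (with the same positivity computation for $z_{1,2}$, the same casework for equal masses and right-angled RE, and the same Jacobian sketch identifying $\{c_0=0\}$ with the fold), and the Hessian signature is obtained by anchoring to \cite{james} on the slice $|\lambda|=|\rho|$ via the proposition on $M_{\Omega,-\Omega}\cong N_\Omega$ and then propagating by continuity. Your explicit flag that each connected RE family must meet the cospherical slice for the continuity argument to bite is a point the paper leaves implicit.
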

\begin{figure}
	\centering
	\begin{subfigure}[b]{0.4\textwidth}
		\includegraphics[width=\textwidth]{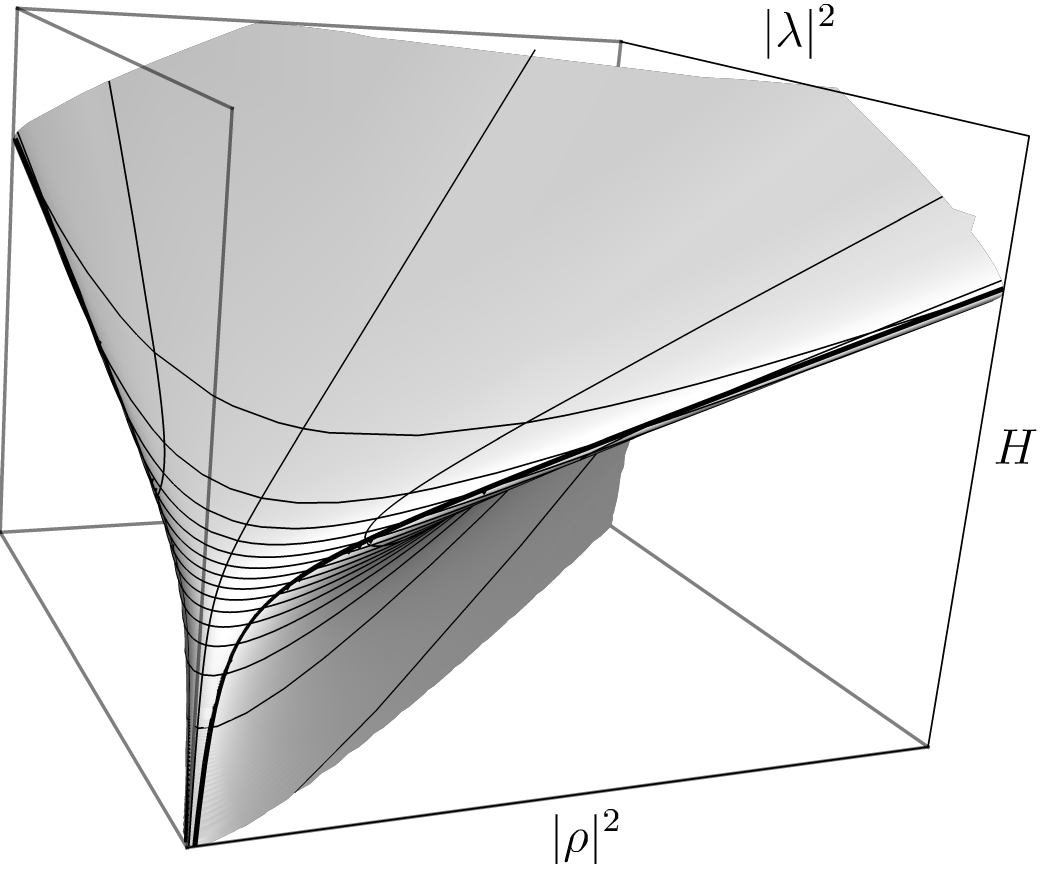}
		\caption{$0<\theta<\pi/2$, acute}
		
	\end{subfigure}
	\qquad\quad
	\begin{subfigure}[b]{0.4\textwidth}
		\includegraphics[width=\textwidth]{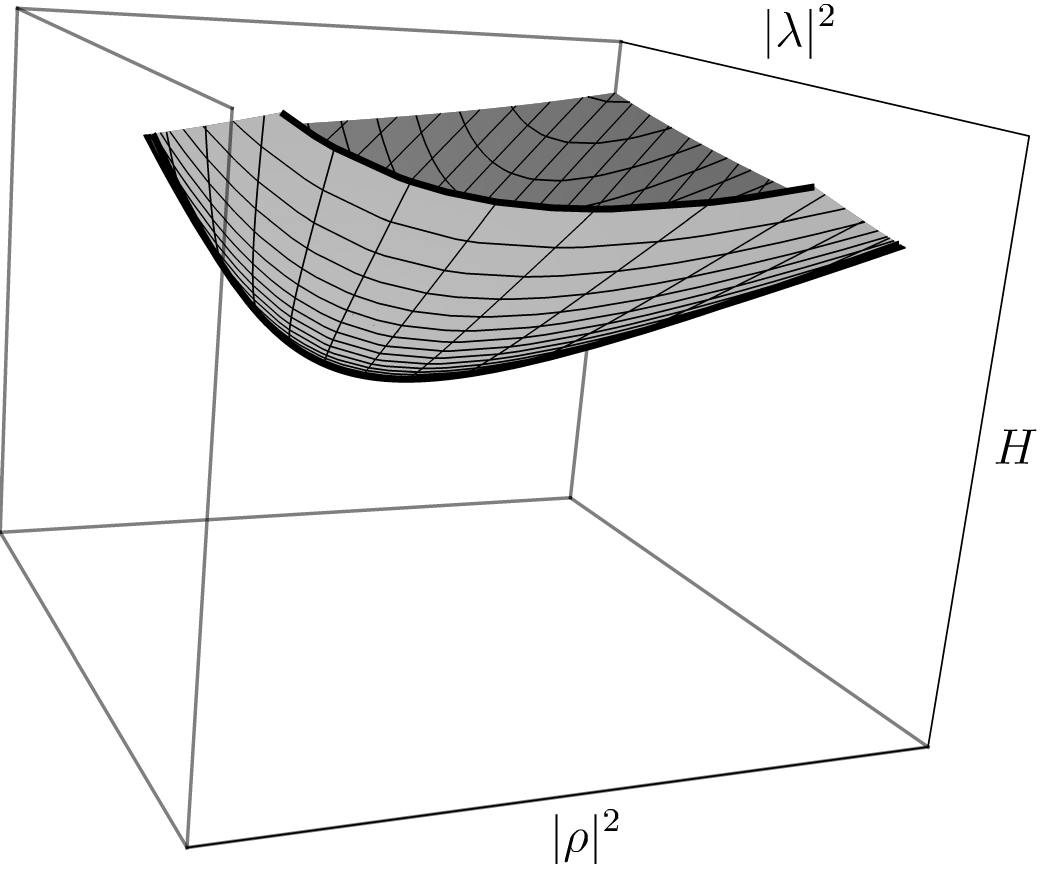}
		\caption{$\pi/2<\theta<\pi$, obtuse}
	
	\end{subfigure}
	\caption{\label{obtuse}The energy-Casimir bifurcation diagram for non-equal masses, specifically for $m_1=3$ and $m_2=2$. The axes for both diagrams are to the same scale. The coordinate lines running from left to right are of constant \(\theta\), and those transversal to them are for constant \(\tau\). We have deliberately removed an upper section of the obtuse component to demonstrate that the surface is folded along a cusp. The lines of constant \(\theta\) above the fold are for \(\theta\) smaller than those lines below.}
\end{figure}
\subsection{Stability of the relative equilibria for the Lagrange top}
Before jumping straight into a stability analysis, we pause to review some of our terminology for the Lagrange top RE. As the angle \(\theta\) corresponds to the angle the body axis makes with the vertical, we will alternatively refer to acute and obtuse RE by \emph{upright} and \emph{downward} respectively. Furthermore, right-angled RE with \(\theta=\pi/2\) might more fittingly be described as being \emph{horizontal}. Alternatively of course, the potential in \eqref{lagrange_pot} applies equally well to the case of two particles on a sphere which are repelled by each other. In this case, the old terminology is perfectly applicable.

From the characteristic polynomial in \eqref{lagrangepoly}, one can immediately see from the root \(t^2=2\alpha\gamma\cos\theta\) that RE which are upright are unstable. This is in marked difference to the ordinary Lagrange top where a sufficiently quickly spinning top is stable when standing vertical (the sleeping top). We deduced this result earlier in Figure~\ref{tube} from the flow on the singular strata. The second pair of roots may be analysed by again using \eqref{x1x2} to write \(|R|^2=|R_1|^2=|R_2|^2\) in terms of \(\theta\) and \(|\eta|\) which gives
\begin{equation}\label{spin}
4 \alpha^2 |R|^2-8 \alpha \gamma \cos \theta=4|\eta|^2+\frac{\alpha^2\gamma^2}{|\eta|^2}-4\alpha\gamma\cos\theta.
\end{equation}
This is always strictly positive for \(\theta\) greater than zero, and therefore always corresponds to imaginary eigenvalues. For the right-angled RE we have the characteristic polynomial in \eqref{rightanglepoly}, for which the roots of the quartic factor are equal to
\begin{equation}
\pm\sqrt{-\alpha^2(\sqrt{k_{11}}\pm\sqrt{k_{22}})^2}.
\end{equation}
Consequently, we see that these eigenvalues are all non-zero and imaginary away from the isosceles \(k_{11}=k_{22}\) branch.

Unlike for the 2-body problem, where we ignored the singular RE since the hamiltonian is not defined for \(\theta=0\) or \(\pi\), the energy-Casimir diagram for the Lagrange top contains two `threads' corresponding to these families of RE. From Figure~\ref{lagrange} for the `isosceles' RE, one sees that the lines of constant \(\theta\) converge to a single thread on top of the surface as \(\theta\rightarrow 0\). This thread corresponds to those configurations of the top held vertically upright and spinning about its axis. As the spin decreases, that is, as \(k_{11}=k_{22}=|R|^2\) decreases below \(2\gamma/\alpha\), the second root pair in \eqref{lagrangepoly} transitions from imaginary to real, and the thread detaches from the surface and extends outwards as an isolated thread until \(|R|=0\), where the top is motionless. This is entirely analogous to the case for the ordinary Lagrange top as shown in \cite{cushbook} and is a mathematical realisation of `gyroscopic stabilisation'. It cannot be seen in our picture, however as \(\theta\) tends to \(\pi\), the lines of constant \(\theta\) converge underneath the surface to a thread which is not isolated, corresponding to those motions for when the body is hanging vertically downwards. 

As with the 2-body problem, we remark that taking the slice through the energy-Casimir diagram for \(|\rho|=|\lambda|\) in Figure~\ref{lagrange} gives the same diagram as in \cite{borisov2004}, where they also consider the 2-body problem on the 2-sphere for the same potential.

\begin{figure}
	\centering
	\begin{subfigure}[b]{0.4\textwidth}
		\includegraphics[width=\textwidth]{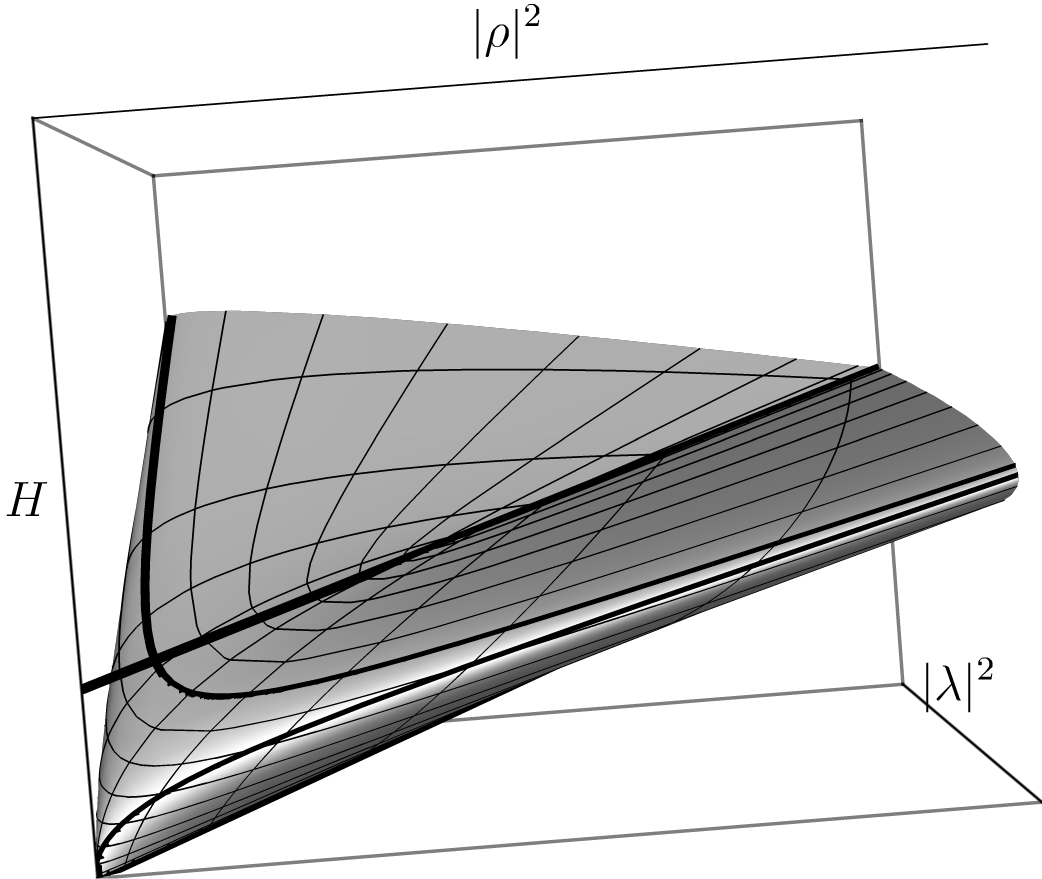}
		\caption{$0<\theta<\pi$, `isosceles'}
		
	\end{subfigure}
	\qquad\quad
	\begin{subfigure}[b]{0.4\textwidth}
		\includegraphics[width=\textwidth]{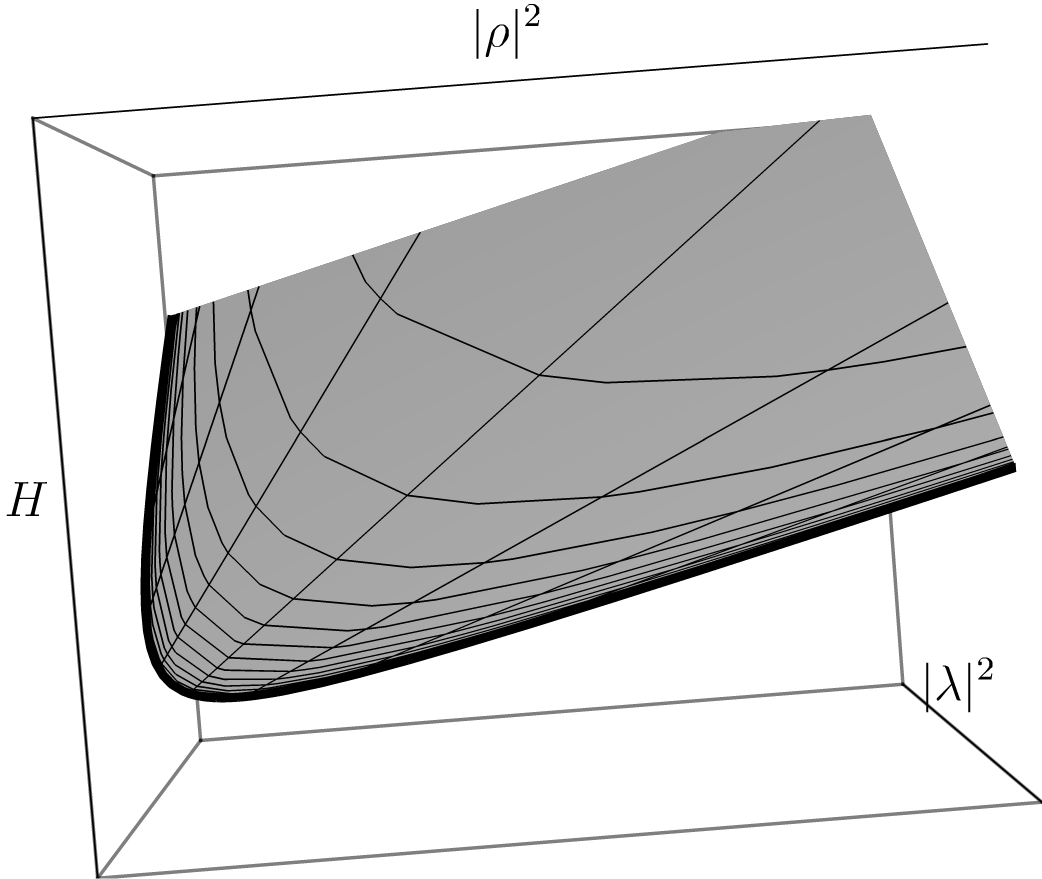}
		\caption{$\theta=\pi/2$, `right-angled'/horizontal RE}
		
	\end{subfigure}
	\caption{\label{lagrange}The energy-Casimir bifurcation diagram for the 4-dimensional Lagrange top, specifically for \(\alpha=2\). The axes for both diagrams are to the same scale. For the component on the left, the coordinate lines emanating away from the origin are of constant \(\tau\), and those transversal to them are of constant \(\theta\). 
		The thickened line is that for \(\theta=\pi/2\), and it is along this branch that the additional component of horizontal RE is attached.}
\end{figure}

A linearised stability analysis is sufficient for deducing instability, but not conclusive for stability. Fortunately, the existence of the additional integral \(I\) in \eqref{INTEGRAL} on the full reduced space will allow us to obtain the strongest possible results for stability. We begin by claiming that all downward RE are not only critical points of the hamiltonian, but also of \(I\). To see this, observe that for \(\theta>\pi/2\), the linearisation admits two distinct non-zero imaginary eigenvalue pairs. If such a point were not a critical point of \(I\), then in a neighbourhood of this point we could introduce coordinates which include \(I\) as a coordinate function. As \(\{H,I\}=0\), the hamiltonian in these coordinates is independent of \(I\), and therefore a change in the \(I\) coordinate away from a RE would also give a RE. This is not compatible with the non-zero eigenvalues.

Therefore, at such RE the differentials \(dH\) and \(dI\) are both zero. We also claim that their Hessians \(d^2H\) and \(d^2I\) are linearly independent. This is equivalent to showing that the linearisations of the flows generated by \(H\) and \(I\) at the fixed point are independent. With help from Table~\ref{table} one can find the flow generated by the integral \(I\). In particular, one can show 
\begin{align*}
\dot{k}_{11}&=\{I,k_{11}\}=4\gamma(k_{13}k_{12}-k_{11}k_{23}),\\
\dot{k}_{22}&=\{I,k_{22}\}=4\gamma(k_{13}k_{22}-k_{12}k_{23}).
\end{align*}
By linearising these at a RE, and comparing with the first and fourth rows of the matrix in \eqref{8x8}, we see that the two linearisations are indeed linearly independent for when \(k_{11},k_{12},k_{22}\ne 0\), and therefore so too are \(d^2H\) and \(d^2I\).

The quadratic forms \(d^2H\) and \(d^2I\) are both well defined on the tangent space of the symplectic leaf at a RE since it is a critical point for each of them. The Lie algebra of such quadratic forms with respect to the Poisson bracket is isomorphic to the symplectic Lie algebra \(\text{Symp}(4;\R)\). Furthermore, as \(\{H,I\}=0\), the quadratic forms also commute, and as \(d^2H\) has distinct eigenvalues, and is linearly independent from \(d^2I\), it follows that they span a Cartan subalgebra of \(\text{Symp}(4;\R)\). Up to conjugacy by canonical transformations there are only four such Cartan subalgebras: center-center, saddle-centre, saddle-saddle, and focus-focus \cite{bolsinov}. As the eigenvalues of the linearisation are all purely imaginary, this forces it to be of center-center type. It follows from a normal-form result in \cite{lerman} that there exist Darboux coordinates \((q_1,p_1,q_2,p_2)\) in a neighbourhood of the RE (which may be taken to be the origin) where 
\begin{align*}
H&=a(q_1^2+p_1^2)+b(q_2^2+p_2^2)+\dots\\
I&=c(q_1^2+p_1^2)+d(q_2^2+p_2^2)+\dots
\end{align*}
Here the dots denote terms of cubic order in the coordinates, and where we have further supposed that \(I\) and \(H\) are zero at the origin. As the quadratic forms are linearly independent, we can find \(x,y\in\R\) such that the function \(F=xH+yI\) has \(d^2F\) positive definite at the RE. The function \(F\) therefore has a minimum at this point and thus, for any small \(\delta>0\) there exists an \(\varepsilon\) for which \(F^{-1}(\epsilon)\) is contained to a ball of radius \(\delta\). It follows then from
\[
 H^{-1}(\varepsilon_1)\cap I^{-1}(\varepsilon_2)\subset F^{-1}(x\varepsilon_1+y\varepsilon_2)
\]
that the level sets of \(H\) and \(I\) are also contained to arbitrarily small neighbourhoods around the RE. As the flow is contained to these level sets, small perturbations away from these RE result in motions contained to tori which remain close to the RE.

 To be completely watertight, the single RE which has evaded our argument so far is the one corresponding to the body hanging vertically downward and motionless as \(k_{11}=k_{12}=k_{22}=0\), and thus the two Hessians may not be independent. For this point, Lyapunov stability is a consequence of it being a global minimum of the hamiltonian. We can now pull all of this together into a theorem.
\begin{thm}
	For the 4-dimensional Lagrange top the following stability results apply to the relative equilibria in the full reduced space. 
	\begin{enumerate}
		\item[\textendash]All {upright} relative equilibria, that is, those with \(\theta<\pi/2\), are linearly unstable. 
		\item[\textendash] All {hanging} relative equilibria with \(\theta>\pi/2\), and all {horizontal} relative equilibria with \(\theta=\pi/2\), excluding those which are isosceles, are Lyapunov stable. 
	\end{enumerate}
\end{thm}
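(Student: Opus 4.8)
The plan is to treat the two clauses separately, handling the instability claim by inspection of the linearisation and the Lyapunov claim by combining linear stability with the extra integral \(I\). For the upright relative equilibria (\(0<\theta<\pi/2\)) I would read the eigenvalues straight off the characteristic polynomial \eqref{lagrangepoly}: since \(\alpha,\gamma>0\) and \(\cos\theta>0\), the factor \(t^2-2\alpha\gamma\cos\theta\) contributes the real pair \(t=\pm\sqrt{2\alpha\gamma\cos\theta}\), and a positive real eigenvalue produces a hyperbolic direction, so the corresponding fixed point in the full reduced space is linearly unstable. This clause excludes the singular RE at \(\theta=0\) and the horizontal RE at \(\theta=\pi/2\), so \eqref{lagrangepoly} is valid throughout its stated range.

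For the second clause I would first establish linear stability. For a hanging RE, \(\theta>\pi/2\), so \(\cos\theta<0\) and the pair \(t^2=2\alpha\gamma\cos\theta\) is imaginary; for the other pair, \eqref{spin} together with the arithmetic--geometric mean estimate \(4|\eta|^2+\alpha^2\gamma^2/|\eta|^2\ge 4\alpha\gamma\ge 4\alpha\gamma\cos\theta\) shows \(4\alpha^2|R|^2-8\alpha\gamma\cos\theta>0\), so that pair is imaginary as well, and a direct comparison shows the two pairs are distinct and nonzero. For the non-isosceles horizontal RE one uses \eqref{rightanglepoly} instead, whose nonzero roots \(\pm\sqrt{-\alpha^2(\sqrt{k_{11}}\pm\sqrt{k_{22}})^2}\) are imaginary and, away from \(k_{11}=k_{22}\), distinct and nonzero.

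To upgrade linear stability to Lyapunov stability I would use the integral \(I\) of \eqref{INTEGRAL}. First, both \(dH\) and \(dI\) vanish at such a RE: otherwise one could take \(I\) as a coordinate near the point, and \(\{H,I\}=0\) would make \(H\) independent of it, producing a one-parameter family of RE through the point, which is incompatible with the nonzero eigenvalues. Next, \(d^2H\) and \(d^2I\) are linearly independent: computing the \(I\)-flow from Table~\ref{table} (for instance \(\dot k_{11}=4\gamma(k_{13}k_{12}-k_{11}k_{23})\) and \(\dot k_{22}=4\gamma(k_{13}k_{22}-k_{12}k_{23})\)), linearising, and comparing with the first and fourth rows of \eqref{8x8}, gives independence whenever \(k_{11},k_{12},k_{22}\ne 0\). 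Since \(\{H,I\}=0\), \(d^2H\) has distinct eigenvalues, and the two forms are independent, they span a Cartan subalgebra of \(\mathrm{Symp}(4;\R)\), which must be of center-center type because the eigenvalues are purely imaginary. Lerman's normal form then provides Darboux coordinates in which \(H=a(q_1^2+p_1^2)+b(q_2^2+p_2^2)+\cdots\) and \(I=c(q_1^2+p_1^2)+d(q_2^2+p_2^2)+\cdots\); choosing \(x,y\in\R\) so that \(F=xH+yI\) is positive definite to second order makes \(F\) have a strict local minimum, and the inclusion \(H^{-1}(\varepsilon_1)\cap I^{-1}(\varepsilon_2)\subset F^{-1}(x\varepsilon_1+y\varepsilon_2)\) confines the flow to arbitrarily small neighbourhoods of the RE. The one remaining RE, the motionless hanging state with \(k_{11}=k_{12}=k_{22}=0\), is treated separately: it is the global minimum of \(H\), hence Lyapunov stable.

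The step I expect to be the main obstacle is the linear independence of \(d^2H\) and \(d^2I\) together with the Cartan-subalgebra and normal-form package: this needs the explicit \(I\)-flow, careful handling of the degenerate locus \(k_{11}k_{12}k_{22}=0\) (where the argument must be replaced, as in the motionless case), and the correct invocation of the Lerman and Bolsinov classifications to pin down the center-center type. By contrast, the sign analysis underlying both the instability claim and linear stability is routine once \eqref{spin} is in hand.
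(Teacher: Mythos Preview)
Your proposal is correct and follows essentially the same route as the paper: the instability for \(\theta<\pi/2\) is read off the real root of \eqref{lagrangepoly}; linear stability for the hanging and non-isosceles horizontal cases comes from the sign analysis of \eqref{spin} and \eqref{rightanglepoly}; and the Lyapunov upgrade proceeds exactly as the paper does, via \(dI=0\) (argued by the coordinate/nonzero-eigenvalue contradiction), independence of \(d^2H\) and \(d^2I\) from the linearised \(I\)-flow, the center-center Cartan subalgebra classification, Lerman's normal form, and the combination \(F=xH+yI\), with the motionless hanging state handled separately as a global minimum. Your identification of the main technical hurdle (independence of the Hessians and the normal-form package) also matches what the paper leans on most heavily.
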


\section*{Concluding comments and the scope for further work}
It is natural to ask how we might extend these results. In this regard it is crucial to note that the fundamental idea upon which this work rests is the `accidental' isomorphism between \(\mathfrak{g}\times\mathfrak{g}\) and \(\mathfrak{so}(4)\). It is thanks to this that we have the double cover over \(SO(4)\) and the connection with the Lagrange top, and the commuting left and right actions which allow us to reduce in stages. It is because of this `accident' that our work does not generalise to more bodies, or to the negative curvature case of the 2-body problem on hyperbolic 3-space. For this space, the symmetry group \(SO(1,3)\) is double covered by \(SL(2;\C)\). It is clear that a different approach must be taken to resolve this case.

We do however obtain a fairly straightforward generalisation of our work by replacing the algebra of real quaternions with the split quaternions. This alteration results in the 2-body problem on \(SO(1,2)\) which is the unit `sphere' in this algebra. Everything then proceeds almost exactly the same, the full symmetry group is \(SO(2,2)\) and this is double covered by two `spheres' and we have again two commuting left and right actions. This alteration is entirely due to another `accidental' isomorphism between \(\mathfrak{so}(1,2)\times\mathfrak{so}(1,2)\) and \(\mathfrak{so}(2,2)\). In a similar fashion we could even push this idea further and replace the quaternions with the biquaternions and consider the 2-body problem on \(SL(2;\C)\).

Another route of study concerns the more general problem of dynamics on a cotangent bundle of a group \(G\) which is symmetric with respect to both the left and right translations by a given subgroup \(H\). In addition to the example we have dealt with, a famous example of such a system are the Riemannian ellipsoids \cite{riemann}. This system concerns the motion of a self-gravitating distribution of mass whose configuration is given by an element of \(G=SL(3;\R)\) which is symmetric by the left and right actions of \(H=SO(3)\). This system is comprehensively treated in the work of Chandrasekhar in \cite{chand}, and a more modern hamiltonian account may be found in \cite{modern}. The famous work of Riemann concerns the classification of the relative equilibria, and one wonders whether our use of reduction by stages could be applied as a possible alternative approach.

It would be interesting to study the limit as one particle's mass dominates the other. This should be expected to approach what is referred to as the restricted 2-body problem on the sphere \cite{restricted}. In particular, it would be interesting to see what the flow on the full reduced space limits to. Furthermore, given that we have the Poisson structure on the full reduced space, it would be nice to see if this offers any use in demonstrating the non-integrability for the 2-body problem (see \cite{nonintegrability}) or whether additional integrable systems can be found for different potentials. One might hope that this would connect with the substantial literature that exists for integrable systems on \(SO(4)\).

Finally, our work concerning the 2-body problem on the sphere cannot be considered complete, and there remain interesting unresolved questions. The nature of the full reduced spaces is one such question. One would like to say more about their geometry, to describe the fibres of the energy-momentum map, and the invariant integral manifolds. Moreover, as the Lagrange top is integrable, one can also ask questions about the foliation of the reduced spaces into invariant tori, the image of the momentum map, and the monodromy of these tori. The stability of the RE for the 2-body problem also remains an open question. As in \cite{james}, we leave the door open for the use of sophisticated KAM methods to strengthen the stability results.
\subsection*{Acknowledgements}
I would like to extend special thanks to my supervisor James Montaldi, for his invaluable teachings over the past few years, and for his helpful thoughts and discussions offered in the preparation of this paper. Whenever I have walked into his office he has always given me his time and careful attention, and for this I am extremely grateful.
\bibliographystyle{alpha}
\bibliography{1234} 
\newcommand{\Addresses}{{
		\bigskip
		\footnotesize
		
		P.~Arathoon, \textsc{School of Mathematics, University of Manchester,
			Manchester, M13 9PL, U.K.}\par\nopagebreak
		\textit{E-mail address},  \texttt{philip.arathoon@manchester.ac.uk}
		
		%
		%
		%
		
}}

\Addresses
\end{document}